\def\bl{\boldsymbol{\ell}}
\def\bmu{\boldsymbol{\mu}}
\def\bgl{\boldsymbol{\lambda}}
\def\bc{\mathbf{c}}
\def\bi{\mathbf{i}}
\def\bk{\mathbf{k}}
\def\bq{\mathbf{Q}}
\def\bx{\mathbf{x}}
\def\by{\mathbf{y}}
\def\gl{\lambda}
\def\bz{\mathbb{Z}}
\def\ft{\mathfrak{t}}
\def\mpn{\mathscr{P}_{m,n}}
\newdimen\hoogte    \hoogte=16pt    
\newdimen\breedte   \breedte=16pt   
\newdimen\dikte     \dikte=1.0pt    
\newenvironment{point}[2]%
  {\vspace{0.5\jot}\ifx*#2\let\pointlabel\relax\else\def\pointlabel{#2}\fi
   \refstepcounter{equation}\trivlist
   \item[\hskip\labelsep\theequation.
         \ifx\pointlabel\relax\else\space\pointlabel\space\fi]
   \ignorespaces #1
  \vspace{0.5\jot}}{\relax}
\newenvironment{Young}{\begingroup
       \def\vr{\vrule height0.6\hoogte width\dikte depth 0.2\hoogte}
       \def\fbox##1{\vbox{\offinterlineskip
                    \hrule height0.6\dikte
                    \hbox to \breedte{\vr\hfill$##1$\hfill\vr}
                    \hrule height0.6\dikte}}
       \vtop\bgroup \offinterlineskip \tabskip=-\dikte \lineskip=-\dikte
            \halign\bgroup &\fbox{##\unskip}\unskip  \crcr}
     {\egroup\egroup\endgroup}
\def\diagram(#1){\begin{Young}#1\cr\end{Young}}
\def\Tritab(#1|#2|#3){\begin{Young}#1\cr\end{Young}\,;\,\
                              \begin{Young}#2\cr\end{Young}\,;\,\
                              \begin{Young}#3\cr\end{Young}}
\def\Twotab(#1|#2){\begin{Young}#1\cr\end{Young}\,;\,
\begin{Young}#2\cr\end{Young}}
\newenvironment{colorYoung}{\begingroup
       \def\vr{\vrule height0.8\hoogte width\dikte depth 0.2\hoogte}
       \def\fbox##1{\vbox{\offinterlineskip
                    \hrule height1.0\dikte
                    \hbox to 1.08\breedte{\vr\hfill$##1$\hfill\vr}
                    \hrule height1.0\dikte}}
       \vtop\bgroup \offinterlineskip \tabskip=-\dikte \lineskip=-\dikte
            \halign\bgroup &\fbox{##\unskip}\unskip  \crcr}
     {\egroup\egroup\endgroup}
\def\Tricolor(#1|#2|#3){\begin{colorYoung}#1\cr\end{colorYoung}\,;\,\
                              \begin{colorYoung}#2\cr\end{colorYoung}\,;\,\
                              \begin{colorYoung}#3\cr\end{colorYoung}}
 \def\Twocolor(#1|#2){\begin{colorYoung}#1\cr\end{colorYoung}\,;\,\
                              \begin{colorYoung}#2\cr\end{colorYoung}}
 \def\colordiagram(#1){\begin{colorYoung}#1\cr\end{colorYoung}}
 \newenvironment{wtYoung}{\begingroup
       \def\vr{\vrule height0.8\hoogte width\dikte depth 0.2\hoogte}
       \def\fbox##1{\vbox{\offinterlineskip
                    \hrule height1.0\dikte
                    \hbox to 2.1\breedte{\vr\hfill$##1$\hfill\vr}
                    \hrule height1.4\dikte}}
       \vtop\bgroup \offinterlineskip \tabskip=-\dikte \lineskip=-\dikte
            \halign\bgroup &\fbox{##\unskip}\unskip  \crcr}
     {\egroup\egroup\endgroup}
\def\Triwt(#1|#2|#3){\begin{wtYoung}#1\cr\end{wtYoung}\,;\,\
                              \begin{wtYoung}#2\cr\end{wtYoung}\,;\,\
                              \begin{wtYoung}#3\cr\end{wtYoung}}
\numberwithin{equation}{section}
\newtheorem{theorem}[equation]{Theorem}
\newtheorem{lemma}[equation]{Lemma}
\newtheorem{proposition}[equation]{Proposition}
\newtheorem{corollary}[equation]{Corollary}
\theoremstyle{definition}
\newtheorem{definition}[equation]{Definition}
\newtheorem{example}[equation]{Example}
\theoremstyle{remark}
\newtheorem{remark}[equation]{Remark}
\begin{document}
\setlength{\itemsep}{1\jot}
\fontsize{13}{\baselineskip}\selectfont
\setlength{\parskip}{0.3\baselineskip}
\vspace*{0mm}
\title[RSK superinsertion and super Frobenius formulae]{\fontsize{9}{\baselineskip}\selectfont RSK superinsertion and super Frobenius formulae}
\author{Deke Zhao}
\address{\bigskip\hfil\begin{tabular}{l@{}}
          School of Applied Mathematics\\
            Beijing Normal University at Zhuhai \\
            Zhuhai, 519087 China\\
             E-mail: \it deke@amss.ac.cn \hfill
          \end{tabular}}
\subjclass[2010]{Primary 05E10, 05E05; Secondary 20C99, 20C15}
\keywords{Frobenius formula; Complex reflection group; Cyclotomic Hecke algebra; Robinson-Schensted-Knuth (super)insertion; Supersymmetric function.}
\vspace*{-3mm}
\begin{abstract}In this paper we extend the Robinson-Schensted-Knuth (RSK) superinsertion algorithm to hook-multipartitions and derive the super Frobenius formula for the characters of cyclotomic Hecke algebras  (Linear and Multilinear Algebra, DOI: 10.1080/03081087.2019.1663140) via the (RSK) superinsertion algorithm. In particular, we obtain a new proof of Mitsuhashi's super Frobenius formula for the characters of Iwahori-Hecke algebras.
\end{abstract}
\maketitle
\vspace*{-5mm}
\section{Introduction}
In \cite{Frobenius}, Frobenius gave a formula of computing the characters of symmetric groups, which is often referred as the Frobenius formula. In his study of representations of the general linear group, Schur \cite{Schur-d,Schur} showed the Frobenius formula can be obtained by the classical Schur-Weyl reciprocity. Inspired by Schur's classical work, Ram \cite{Ram} got the Frobenius formula for the characters of Iwahori-Hecke algebras of type $A$ by applying the Schur-Weyl reciprocity established by Jimbo \cite{Jimbo}; Shoji \cite{S} obtained the Frobenius formula for the characters of cyclotomic-Hecke algebras by making use the  Schur-Weyl reciprocity  given by Sakamoto-Shoji \cite{SS}; Mitsuhashi \cite{M2010} showed the super Frobenius formula for the characters of  Iwahori-Hecke algebras of type $A$  by virtue of the Schur-Weyl reciprocity between the Iwahori-Hecke algebras of type $A$ and the quantum superalgebra \cite{Moon,Mit};   Recently the author \cite{Z-Frob} obtains the super Frobenius formulas for the characters of cyclotomic Hecke algebras by employing the super Schur-Weyl reciprocity between the cyclotomic Hecke algebra and the quantum superalgebra proved in \cite{Zhao18}.

In \cite{Ram98}, Ram observed that the Roichman formula \cite{Roichman} and his Frobenius formula can be acquired by applying the Robinson-Schensted-Knuth (RSK) insertion algorithm. Following Ram's \textit{loc. cit.} work,  Cantrell et. al. \cite{CHM} provided a new proof of Shoji's  Frobenius formula via the modified RSK insertion algorithm for multipartitions. Note that there are super-analogues of RSK insertion algorithm for specific partitions (see e.g. \cite[\S2.5]{B-Regev}, \cite[\S4]{DR}). It is natural to ask whether the super Frobenius formulae in \cite{M2010,Z-Frob} can be showed by adapting the arguments of \cite{Ram98,CHM}.

The purpose of the paper is to derive the super Frobenius formulae in terms of weights on standard tableaux by applying the modified RSK superinsertion algorithm for hook (multi)partitions. A point should be note that our strategy is very similar to the one in \cite{Ram98,CHM}, that is, let $\mpn$ be the set of multipartitions of $n$, for $\bmu\in\mpn$, we first define the $\bmu$-weight of parity-integer sequences. Therefore we can rewrite the super Frobenius formula as a sum over $\bmu$-weighted parity-integer sequences (see Corollary~\ref{Cor:q-mu-wt}).
 Secondly, we introduce the RSK superinsertion algorithm for hook multipartitions, which gives a bijection between pairs specific tableaux and parity-integer sequences (see \S\ref{Subsec:RSK}). We then define the $\bmu$-weight $\mathrm{wt}_{\bmu}$ of standard tableaux, which is exactly the $\bmu$-weight of parity-integer sequences up to the RSK superinsertion algorithm. Thus we arrive at the super Frobenius formula for the characters of cyclotomic Hecke algebras (see Theorem~\ref{Them:q=wt-Schur})\begin{equation*}
   q_{\bmu}(\bx/\by;q,\bq)=\sum_{\bgl\in\mpn}\biggl(\sum_{T\in\mathrm{std}
   (\bgl)}\mathrm{wt}_{\bmu}(T)\biggr)S_{\bgl}(\bx/\by),
 \end{equation*}
where $\mathrm{std}(\bgl)$ is the set of standard $\bgl$-tableaux and $S_{\bgl}(\bx/\by)$ is the supersymmetric Schur function associated to  $\bgl$. As a direct application, we derive a formula to computes the irreducible characters of cyclotomic Hecke algebras as a sum of $\bmu$-weights over standard tableaux (see Corollary~\ref{Cor:Character}).

Let us remark that if $m=1$ then the super Frobenius formula for the characters of cyclotomic Hecke algebras is  Mitsuhashi's super Frobenius formula for the characters of Iwahori-Hecke algebras, therefor we obtain a new proof of Mitsuhashi's super Frobenius formula (see Remark~\ref{Remark:m=1}), which is super-extension of Ram's work \cite{Ram98}. On the other hand, if we let $q=1$ and $\mathbf{Q}=(\varsigma, \varsigma^2,\ldots, \varsigma^m)$, where $\varsigma$ is a fixed primitive $m$-th root of unity, then the super Frobenius formula gives the super Frobenius formula for the complex reflection group of type $G(m,1,n)$ (see \cite[Remark~4.12]{Z-Frob}). So we derive a formula to computes the irreducible characters of  complex reflection group of type $G(m,1,n)$ as a sum of $\bmu$-weights over standard tableaux (see Remark~\ref{Remark:q=1}).

The layout of the paper is as follows. In Section~\ref{Sec:Cyc-Hecke-algebras} we review briefly the cyclotomic Hecke algebras and fix our notations on (multi)partitions.  Section~\ref{Sec:super-Frobenius} devotes to introduce the supersymmetric Schur functions and power sum supersymmetric functions associated to multipartitions and the super Hall-Littlewood functions. In particular, we present the super Frobenius formula for the characters of cyclotomic Hecke algebras. We define the ($\bmu$-)weighted parity-integer sequences  and give a combinatorial version of the Frobenius formula in Section~\ref{Sec:mu-weight-i}. The last section deal with the RSK superinsertion for hook-(multi)partitions and present the new proof of the super Frobenius formula via the RSK superinsertion  algorithm for multipartitions.

Throughout the paper, $\mathbb{K}=\mathbb{C}(q,\bq)$ is the field of rational function in indeterminates $q$ and  $\bq=(Q_1, \ldots, Q_m)$; $k_1, \ldots, k_m$, $\ell_1, \ldots, \ell_m$ are non-negative integers with  $\sum_{i=1}^mk_i=k$, $\sum_{i=1}^m\ell_i=\ell$. We denote by  $\bk=(k_1, \ldots, k_m)$, $\bl=(\ell_1, \ldots, \ell_m)$ and let $d_i=d_{i-1}+k_i+\ell_i$ ($d_0=0$) for $i=1, \ldots, m$. For $i=1, \ldots, k+\ell$, we define the \textit{parity function}  $i\mapsto \overline{i}$ by setting \begin{equation*} \overline{i}=\left\{\begin{array}{ll}\overline{0}, &\hbox{ if }d_{a-1}< i\leq d_{a-1}+k_{a}\text{ for some }1\leq a\leq m;\\\overline{1}, & \hbox{ if }d_{a}-\ell_{a}<i\leq d_{a}\text{ for some }1\leq a\leq m;.\end{array}\right.\end{equation*} The \textit{color} of $i$ is defined to be $c(i)=a$ if $d_{a-1}<i\leq d_a$ for some $1\leq a\leq m$.
\subsection*{Acknowledgements}Part of this work was carried out while the author was visiting the Chern Institute of Mathematics at the Nankai University. The authors would like to thank Professor Chengming Bai for his hospitality during his visit. 
The author was supported partly by the National Natural Science Foundation of China
(Grant No. 11571341, 11671234, 11871107).
\section{Cyclotomic Hecke algebras}\label{Sec:Cyc-Hecke-algebras}
In this section we review briefly some facts about cyclotomic Hecke algebras for latter using and fix our notations on (multi)partitions.
\begin{point}{}*\label{subsec:G(m,1,n)}
 Let $W_{m,n}$ be the complex reflection group of type $G(m,1,n)$. According to \cite{shephard-toda}, $W_{m,n}$ is generated by $s_0, s_1, \dots, s_{n-1}$ with relations
  \begin{align*}&s_0^m=1,\quad s_1^2=\cdots=s_{n-1}^2=1,&&\\
  &s_0 s_1s_0 s_1=s_1s_0 s_1s_0,&&\\
 & s_is_j=s_js_i,&&\text{ if } |i-j|>1,\\
&s_is_{i+1}s_i=s_{i+1}s_{i}s_{i+1},&& \text{ for }1\leq i<n-1.\end{align*}
 It is well-known that $W_{m,n}\cong(\mathbb{Z}/m\bz)^{n}\rtimes \mathfrak{S}_{n}$, where $s_1, \dots, s_{n-1}$ are generators of the symmetric group $\mathfrak{S}_{n}$ of degree $n$ corresponding to transpositions $(1\,2)$, $\ldots$, $(n\!-\!1\,n)$. For $a=1,\ldots, n$,  let $t_a=s_{a-1}\cdots s_1s_0s_1\cdots s_{a-1}$. Then $t_1, \cdots, t_n$ are generators of $(\mathbb{Z}/m\bz)^{n}$ and any element $w\in W_{m,n}$ can be written in a unique way as $w=t_1^{c_1}\cdots t_n^{c_n}\sigma$, where $\sigma\in \mathfrak{S}_{n}$ and $c_i$ are integers such that $0\leq c_i<m$.
 \end{point}

\begin{point}{}*\label{Subsec:Cyc-Hecke-algebras}The \textit{cyclotomic Hecke algebras} or \textit{Ariki-Koike} algebras were introduced independently by Cherednik \cite{Cherednik}, Brou\'{e}-Malle \cite{Broue-Malle}, and Ariki-Koike \cite{AK}, it is the unital
associative $\mathbb{K}$-algebra $H=H_{m,n}(q,\mathbf{Q})$ generated by
$T_0,T_1,\dots,T_{n}$ and subject to relations
\begin{align*}&(T_0-Q_1)\dots(T_0-Q_m)=0,&&\\
&T_0T_1T_0T_1=T_1T_0T_1T_0,&&\\
&T_i^2=(q-q^{-1})T_i+1, &&\text{ for }1\leq i< n,\\
&T_iT_j=T_jT_i, &&\text{ for }|i-j|>2,\\
 &T_iT_{i+1}T_i=T_{i+1}T_{i}T_{i+1}, &&\text{ for }1\leq i\leq n-2.\end{align*}
If $s_{i_1}s_{i_2}\cdots s_{i_k}$ is a reduced expression for $\sigma\in \mathfrak{S}_n$. Then $T_{\sigma}:=T_{i_1}T_{i_2}\cdots T_{i_k}$ is independent of the choice of reduced expression and  $\{T_{\sigma}|\sigma\in \mathfrak{S}_n\}$ is a linear basis of the subalgebra $H_n(q)$ of $H$ generated by $T_1, \ldots, T_{n-1}$, which is the generic Iwahori-Hecke algebra of type $A$.

Let $\phi: \mathbb{K}\rightarrow \mathbb{C}$ be the specialization homomorphism defined by $\phi(q)=1$ and $\phi(Q_i)=\varsigma^i$ for each $i$, where $\varsigma$ is a fixed primitive $m$-th root of unity. By the specialization homomorphism $\phi$, one obtains $\mathbb{C}\otimes_{\mathbb{K}}H\cong \mathbb{C}W_{m,n}$, that is, $H$ can be thought as a deformation of the group algebra $\mathbb{C}W_{m,n}$.
\end{point}

\begin{point}{}*We will need the following presentation of $H$ due to Shoji \cite[Theorem~3.7]{S}.
Let $\Delta$ be the determinant of the Vandermonde matrix $V(\mathbf{Q})$ of degree $m$ with $(a,b)$-entry $Q_b^a$ for $1\leq b\leq m$, $0\leq a<m$. Clearly, we can write $V(\mathbf{Q})^{-1}=\Delta^{-1}V^*(\mathbf{Q})$, where $V^*(\mathbf{Q})=(v_{ba}(\mathbf{Q}))$ and $v_{ba}(\mathbf{Q})$ is a polynomial in $\mathbb{Z}[\mathbf{Q}]$. For $1\leq c\leq m$, we define a polynomial $F_c(X)$ with a variable $X$ with coefficients in $\mathbb{Z}[\mathbf{Q}]$ by
\begin{equation*}
  F_c(X)=\sum_{0\leq i<m}v_{ci}(\mathbf{Q})X^i.
\end{equation*}
Then $H$ is (isomorphic to) the associative $\mathbb{K}$-algebra generated by $T_1, \ldots, T_{n-1}$ and $\xi_1, \ldots, \xi_n$ subject to
\begin{align*}& (T_i-q)(T_i+q^{-1})=0,  && 1\leq i< n,\\
          &(\xi_i-Q_1)\cdots(\xi_i-Q_m)=0, && 1\leq i\leq n, \\
           &T_iT_{i+1}T_{i}=T_{i+1}T_iT_{i+1},&&  1\leq i< n-1,\\
            &T_iT_j=T_jT_i,&& |i-j|\geq 2, \\
         & \xi_i\xi_j=\xi_j\xi_i,&& 1\leq i,j\leq n, \\
         &T_j\xi_i=\xi_iT_j,&&  i\neq j-1, j,\\
 &T_j\xi_{j}=\xi_{j-1}T_j+\Delta^{-2}\sum_{a<b}(Q_{a}-Q_{b})(q-q^{-1})F_{a}(\xi_{j-1})F_{b}(\xi_j),&\\
 &T_j\xi_{j-1}=\xi_{j}T_j-\Delta^{-2}\sum_{a<b}(Q_{a}-Q_{b})(q-q^{-1})F_{a}(\xi_{j-1})F_{b}(\xi_j).&
\end{align*}
\end{point}

\begin{point}{}* Recall that a composition (resp. partition) $\gl=(\gl_1, \gl_2, \ldots)$ of $n$, denote $\gl\models n$ (resp. $\gl\vdash n$) is a sequence (resp. weakly decreasing sequence) of  nonnegative integers such that $|\gl|=\sum_{i\geq1}\gl_i=n$ and we write $\ell(\gl)$ the {\it length} of $\gl$, i.e. the number of nonzero parts of $\gl$. A {\it multicomposition} (resp. {\it multipartition}) of $n$ is an ordered tuple $\bgl=(\gl^{(1)}; \ldots; \gl^{(m)})$ of compositions (resp. partitions) $\lambda^{(i)}$ such that $|\bgl|=\sum_{i=1}^m|\lambda^{(i)}|=n$. We refer to $\gl^{(c)}$ as the $c$-component of $\bgl$ and denote by $\mpn$ the set of all multipartitions of $n$.

Recall that the \textit{Young diagram} $Y(\bgl)$ of multipartition $\bgl$ is the set
 \begin{equation*}
   Y(\bgl)=\{(i,j,c)\in\bz_{>0}\times\bz_{>0}\times \mathbf{m}|1\le j\le\lambda^c_i\},
 \end{equation*}
where $\mathbf{m}=\{1, \ldots, m\}$. We will identify a (multi)partition with its Young diagram. For example, the Young diagram of $\bgl=\left( (2,1,1);(3,2,2,1);(4,3,1)\right)\in\mathscr{P}_{3,20}$ is
\begin{equation*}
  \bgl=\biggl(\,\Tricolor(&\cr\cr|&&\cr&\cr&\cr|&&&\cr&&\cr)\,\biggr).
\end{equation*}
\end{point}

\begin{point}{}*
Recall that a partition $\gl=(\gl_1, \gl_2, \cdots)\vdash n$ is said to be a \textit{$(k, \ell)$-hook partition} of $n$ if $\gl_{k+1}\leq \ell$. We let  $H(k,\ell;n)$ denote the set of all $(k,\ell)$-hook partitions of $n$, that is
\begin{eqnarray*}\label{Def:hook}
H(k,\ell;n)=\{\gl=(\gl_1,\gl_2,\cdots)\vdash n\mid \gl_{k+1}\le \ell\}.
\end{eqnarray*}
Further, a multipartition $\bgl=(\gl^{(1)}; \ldots; \gl^{(m)})$ of $n$ is said to be a \textit{$(\bk,\bl)$-hook multipartition} of $n$ if $\gl^{(c)}$ is a $(k_i,\ell_i)$-hook partition for $c=1, \ldots,m$. We denote by $H(\bk|\bl; m,n)$ the set of all  $(\bk,\bl)$-hook multipartition of $n$. For example, $\bgl=\left( (2,1,1);(3,2,2,1);(4,3,1)\right)$ is a $(1|1,1|2,1|3)$-hook $3$-multipartition of $20$.
\end{point}

\begin{point}{}*\label{subsec:standard-elements}Given a composition $\mathbf{c}=(c_1, \ldots, c_b)$ of $ n$, we denote by  $\mathfrak{S}_n^{(i)}$  the subgroup of $\mathfrak{S}_n$ generated by $s_j$ such that $a_{i-1}+1<j\leq a_i$, where $a_i=\sum_{j=1}^ic_j$ for $1\leq i\leq b$.  Then $\mathfrak{S}_n^{(i)}\cong \mathfrak{S}_{c_i}$. Now we define a parabolic subgroup $\mathfrak{S}_{\mathbf{c}}$ of $\mathfrak{S}_n$ by
\begin{equation*}
 \mathfrak{S}_{\mathbf{c}}=\mathfrak{S}_n^{(1)}\times \mathfrak{S}_n^{(2)}\times\cdots\times \mathfrak{S}_n^{(b)}.
\end{equation*}In other words,  $\mathfrak{S}_{\mathbf{c}}$ is the Young subgroup of $\mathfrak{S}_n$ associated to $\mathbf{c}$.

Let $W_{\mathbf{c}}$ be the subgroup of $W_{m,n}$ obtained as the semidirect product of $\mathfrak{S}_{\mathbf{c}}$ with $(\mathbb{Z}/m\bz)^{n}$. Then $W_{\mathbf{c}}$ can be written as
\begin{equation*}
  W_{\mathbf{c}}=W^{(1)}\times W^{(2)}\times\cdots\times W^{(b)},
\end{equation*}
where $W^{(1)}$ is the subgroup of $W_{m,n}$ generated by $\mathfrak{S}_n^{(i)}$ and $t_j$ such that $a_{i-1}<j\leq a_i$. Then $W^{(i)}\cong W_{m,c_i}$, which enables us to yield a natural embedding
\begin{equation*}
  \theta_{\mathbf{c}}:  W_{m,c_1}\times\cdots\times W_{m,c_b}\hookrightarrow  W_{m,n}
\end{equation*}
for each composition $\mathbf{c}$ of $n$.

For positive integer $i$, we define
\begin{eqnarray*}
  w(1,i)=t_1^i, & & w(a,i)=t_a^is_{a-1}\cdots s_1, \quad 2\leq a\leq n.
\end{eqnarray*}
Let $\mu=(\mu_1,\mu_2,\cdots)$ be a partition of n. We define
\begin{eqnarray*}
  w(\mu,i)&=&w(\mu_1,i)\times  w(\mu_2,i)\times\cdots
\end{eqnarray*}
with respect to the embedding $\theta_{\mu}$.  More generally, for $\bmu=(\mu^{(1)}, \ldots, \mu^{(m)})\in\mpn$,  we define
\begin{eqnarray*}
  w(\bmu) &=& w(\mu^{(1)},1)w(\mu^{(2)},2)\cdots w(\mu^{(m)},m).
\end{eqnarray*}
Then $\{w(\bmu)|\bmu\in\mpn\}$ is a set of conjugate classes representatives for $W_{m,n}$ (see \cite[4.2.8]{JK}).
\end{point}

\begin{point}{}*\label{subsec:standard-elements}Now we define the \textit{standard element} $T(\bmu)\in H$ of type $\bmu\in \mathscr{P}_{m,n}$ as follows.
First for $a\geq 1$, we put $T(a,i)=\xi_a^iT_{a-1}\cdots T_1$.  Then for a partition $\mu=(\mu_1,\mu_2,\ldots)\vdash n$, we define
\begin{equation*}
  T(\mu,i)=T(\mu_1,i)\times T(\mu_2,i)\times\cdots
\end{equation*}
Finally, for $\bmu=(\mu^{(1)};\mu^{(2)}; \ldots; \mu^{(m)})\in\mpn$, we define $T(\bmu)\in\mathcal{H}$ by
\begin{eqnarray}\label{Equ:T-bmu}
   &&T(\bmu)=T(\mu^{(1)},1)\times T(\mu^{(2)},2)\times\cdots\times T(\mu^{(m)},m).
\end{eqnarray}
More generally, we define $T(w)=\xi_1^{c_1}\cdots\xi_n^{c_n}T_\sigma$ for $w=t_1^{_1}\cdots t_n^{c_n}\sigma\in W_{m,n}$.
\end{point}

It is well-known that the irreducible representations of $\mathbb{C}W_{m,n}$ and $H$ are indexed by $\mpn$. We denote by $S_1^{\bgl}$ (resp. $S_q^{\bgl}$) the irreducible representations of $\mathbb{C}W_{m,n}$ (resp. $H$) corresponding to $\bgl\in\mpn$ and by $\chi_1^{\bgl}$ (resp. $\chi_q^{\bgl}$) its irreducible character. Furthermore, it is known that the characters $\chi_1^{\bgl}$ (resp. $\chi_q^{\bgl}$) are completely by their values on $w(\bmu)$ (resp. $T(\bmu)$) for all $\bmu\in\mpn$. For simplicity, we write $\chi_1^{\bgl}(\bmu)=\chi_1^{\bgl}(w(\bmu))$ and $\chi_q^{\bgl}(\bmu)=\chi_q^{\bgl}(T(\bmu))$ repectively.

\section{The super Frobenius formula}\label{Sec:super-Frobenius}

In this section we first recall the definitions of the supersymmetric Schur functions and power sum supersymmetric functions indexed by multipartitions introduced in \cite[\S3]{Z-Frob}, then we  present the super Frobenius formula for the characters of cyclotomic Hecke algebras. Here we will follow \cite{Macdonald} with respect to our notation about symmetric functions unless otherwise stated.

\begin{point}{}*\label{Point:index-bijection}
 Let $\mathbf{x}, \mathbf{y}$ be sets of $k, \ell$ indeterminates respectively as following:
\begin{eqnarray*}
  \mathbf{x}^{(i)}&=&\left\{x^{(i)}_1, \ldots, x_{k_i}^{(i)}\right\},\quad 1\leq i\leq m,\\
  \mathbf{y}^{(i)}&=&\left\{y^{(i)}_1, \ldots, y_{\ell_i}^{(i)}\right\},\quad 1\leq i\leq m,\\
    \mathbf{x}&=&\mathbf{x}^{(1)}\cup\cdots\cup \mathbf{x}^{(m)},\\
    \mathbf{y}&=&\mathbf{y}^{(1)}\cup\cdots\cup \mathbf{y}^{(m)}.
\end{eqnarray*}
We say that the variables in $\bx^{(i)}\cup\by^{(i)}$ are of color $i$ and set $\mathrm{deg}\,x=\bar{0}$, $\mathrm{deg}\,y=\bar{1}$ for $x\in\bx,y\in \by$.
We identify the variables  $x_1^{(1)}$, $\ldots$, $y^{(m)}_{\ell_m}$ with the variables $x_1$, $\ldots$, $x_{k}, y_1, \ldots, y_{\ell}$, and the variables $z_1, \ldots, z_{k+\ell}$ as follows:
\begin{equation*}\label{Equ:Basis-index}
  \begin{array}{cccccccccccccc}
  x_{1}^{(1)}& \cdots & x_{k_1}^{(1)} &y_{1}^{(1)} & \cdots&y_{\ell_1}^{(1)}&\cdots&\cdots&
  x_{1}^{(m)}& \cdots & x_{k_m}^{(m)} &y_{1}^{(m)}&  \cdots&y_{\ell_m}^{(m)}\\
  \updownarrow& \vdots & \updownarrow & \updownarrow  & \vdots&\updownarrow &\cdots&\cdots& \updownarrow& \vdots & \updownarrow & \updownarrow  & \vdots&\updownarrow \\
  x_1& \cdots & x_{k_1} & y_{1} &  \cdots&y_{\ell_1} &\cdots&\cdots&
    x_{k\!-\!k_m\!+\!1}& \cdots & x_{k} & y_{\ell\!-\!\ell_m\!+\!1} &  \cdots&y_{\ell}\\
  \updownarrow& \vdots & \updownarrow & \updownarrow  & \vdots&\updownarrow &\cdots&\cdots& \updownarrow& \vdots & \updownarrow & \updownarrow  & \vdots&\updownarrow \\
  z_1& \cdots & z_{k_1} & z_{k_1+1} &  \cdots&z_{d_1} &\cdots&\cdots&
    z_{d_{m-1}\!+\!1}& \cdots & z_{d_m-\ell_m} & z_{d_m\!-\!\ell_m\!+\!1} &  \cdots&z_{k+\ell}.
\end{array}
\end{equation*}
 We linearly order the variables $x_1^{(1)}, \ldots, x^{(m)}_{k_m}$, $y_1^{(1)}, \ldots, y^{(m)}_{\ell_m}$ by the rule  $x_{*}^{(i)}<y_{*}^{(i)}<x_{*}^{(i+1)}$ and \begin{eqnarray*}
  x_{a}^{(i)}<x_{b}^{(j)}&\text{ if and only if }& i<j\text{ or }i=j\text{ and }a<b;\\
   y_{a}^{(i)}<y_{b}^{(j)}&\text{ if and only if }& i<j\text{ or }i=j\text{ and }a<b,                                                         \end{eqnarray*}
  or equivalently $z_1<z_2<\ldots<z_{k+\ell}$.
\end{point}

\begin{point}{}*
Let $\Lambda_{k}=\mathbb{Z}[x_1, \ldots, x_k]^{\mathfrak{S}_k}$ be the ring of symmetric functions of $k$ variables and $(\Lambda_k)_{\mathbb{Q}}=\Lambda_m\otimes_{\mathbb{Z}}\mathbb{Q}$. It is well-known that the \textit{power sum symmetric functions} \cite[$\mathrm{P}_{23}$]{Macdonald}
\begin{equation*}
  p_0(\bx)=1\text{ and } p_a(\bx)=\sum_{i=1}^{k}x_{i}^a\text{ for } a=1,2,\cdots
\end{equation*}
generate $\Lambda_{k}$ under the power series multiplication, whilst $\Lambda_k^n$ has a basis
\begin{equation*}
  p_{\mu}(\bx):=p_{\mu_1}(\bx)p_{\mu_2}(\bx)\cdots \text{ with }\mu\vdash n.
\end{equation*}

An alternative basis of $\Lambda_k^n$ is given by the \textit{Schur functions} $S_{\lambda}(\bx)$, $\lambda\vdash n$, which can be defined as
\begin{equation*}
    S_{\lambda}(\bx)=\sum_{\mu\vdash n}Z_{\mu}^{-1}\chi_1^{\lambda}(\mu)p_{\mu}(\bx),
\end{equation*}
here $Z_{\mu}$ is the order of the centralizer in $\mathfrak{S}_n$ of an element of cycle type $\mu$.
Let us remark that $S_{\lambda}(\bx)=0$ whenever $\ell(\lambda)>k$ (see e.g. \cite[Equ.~(3.8)]{King}).\end{point}

\begin{point}{}*
Now we denote by $\Lambda_{k,\ell}$ the ring of polynomials in $x_1, \ldots, x_k,y_1, \ldots, y_{\ell}$, which are separately symmetric in $\mathbf{x}$'s and $\mathbf{y}$'s, namely
\begin{equation*}
  \Lambda_{k,\ell}=\mathbb{Q}[x_1, \ldots, x_k]^{\mathfrak{S}_k}\otimes_{\mathbb{Q}}\mathbb{Q}[y_1, \ldots, y_\ell]^{\mathfrak{S}_\ell}.
\end{equation*}

Following \cite[\S4]{King}, we define the  \textit{power sum supersymmetric function} $p_i(\bx/\by)$ to be \begin{eqnarray*}
       &&p_0(\bx/\by)=1, \\
        &&p_i(\bx/\by)=p_i(\bx)-p_i(\by) \text{ for } i\geq 1.
      \end{eqnarray*}

Recall that $\varsigma$ is a fixed primitive $m$-th root of unity. For $\bmu=(\mu^{(1)}, \ldots, \mu^{(m)})\in\mpn$, we define the power sum supersymmetric function $P_{\bmu}(\bx/\by)$ associated to $\bmu$ as follows. For each integer $t\geq 1$ and $i$ such that $1\leq i\leq m$, set
\begin{equation*}
  P_t^{(i)}(\bx/\by)=\sum_{j=1}^m\varsigma^{-ij}p_{t}(\bx^{(j)}/\by^{(j)}).
\end{equation*}
For a partition $\mu^{(i)}=(\mu_1^{(i)}, \mu_2^{(i)}, \cdots)$, we define a function $P_{\mu^{(i)}}(\bx/\by)\in \Lambda_{k,\ell}$ by
\begin{equation*}
  P_{\mu^{(i)}}(\bx/\by)=\prod_{j=1}^{\ell(\mu^{(i)})}P_{\mu_j^{(i)}}^{(i)}(\bx/\by),
\end{equation*}
and define 
\begin{equation*}
  P_{\bmu}(\bx/\by)=\prod_{i=1}^mP_{\mu^{(i)}}(\bx/\by).
\end{equation*}
\end{point}



\begin{point}{}*
Following  \cite[\S6]{B-Regev} or \cite[$\mathrm{P}_{280}$]{King}, the \textit{supersymmetric Schur function} $S_{{\lambda}^{(i)}}(\bx/\by)\in \Lambda_{k,\ell}$  corresponding to a partition $\lambda^{(i)}=\left(\lambda_1^{(i)}, \lambda_2^{(i)}, \ldots\right)$ is defined as
\begin{equation*}
  S_{\lambda^{(i)}}(\bx^{(i)}/\by^{(i)}):=\sum_{\mu\subset\lambda}(-1)^{|\lambda-\mu|}
  S_{\mu}(\bx^{(i)})S_{\lambda'/\mu'}(\by^{(i)}),
\end{equation*}
where $S_{\lambda'/\mu'}(\by^{(i)})$ is the skew Schur function associated to the conjugate $\lambda'/\mu'$  of the skew partition $\lambda/\mu$. Note that the skew Schur function  $S_{\eta/\theta}(\by^{(i)})$ can be calculated by $S_{\eta/\theta}(\by^{(i)})=\sum_{\nu}c^{\eta}_{\theta\nu}S_{\nu}(\by^{(i)})$, where the coefficients $c^{\eta}_{\theta\nu}$ are determined by the Littlewood-Richardson rule in the product of Schur functions (see \cite[$\mathrm{P}_{143}$]{Macdonald}).

For  $\bgl=(\lambda^{(1)}; \ldots; \lambda^{(m)})\in\mpn$, we define the supersymmetric Schur function associated with $\bgl$ by
\begin{eqnarray*}\label{Equ:Supersymm-multi}
S_{\bgl}(\mathbf{x}/\mathbf{y})&=&\prod_{i=1}^{m}S_{{\lambda}^{(i)}}^{(i)}(\bx^{(i)}/\by^{(i)}),
\end{eqnarray*}
which is a super analogue of  the Schur function associated to $m$-multipartitions defined in \cite[(6.1.2)]{S}. It should be noted that $S_{\lambda^{(i)}}(\bx^{(i)}/\by^{(i)})=0$ unless $\lambda^{(i)}$ is a $(k,\ell)$-hook partition (see e.g. \cite[Equ.~(4.10)]{King}), which implies that $S_{\bgl}(\mathbf{x}/\mathbf{y})=0$ unless $\bgl$ is a $(\bk,\bl)$-hook multipartition.
\end{point}

\begin{point}{}*
Now we are ready to give a combinatorial interpretation of the supersymmetric Schur functions in terms of $(\bk,\bl)$-semistandard tableau, which also shows $S_{\bgl}(\bx/\by)=0$ unless $\bgl$ is $(\bk,\bl)$-hook multipartition of $n$.

Let $\lambda$ be a partition of $n$. A \textit{$(k,\ell)$-tableau} $\ft$ of shape $\lambda$ is obtained from the diagram of $\lambda$ by replacing each node by one of the variables $\mathbf{z}$, allowing repetitions. For a  $(k,\ell)$-tableau $\ft$,  we denote by $\ft_\mathbf{x}$ (resp. $\ft_\mathbf{y}$) the nodes filled with variables $\mathbf{x}$ (resp. $\mathbf{y}$).
We say a  $(k,\ell)$-tableau $\ft$ is \textit{$(k,\ell)$-semistandard} if
\begin{enumerate}
  \item[(a)] $\ft_\mathbf{x}$ (resp. $\ft_\mathbf{y}$) is a tableau (resp. skew tableau); and
  \item[(b)] The entries of $\ft_\mathbf{x}$ are nondecreasing in rows, strictly increasing in columns, that is, $\ft_\mathbf{x}$ is a \textit{row-semistandard tableau}; and
  \item[(c)] The entries of $\ft_\mathbf{y}$ are nondecreasing in columns, strictly increasing in rows, that is, $\ft_\mathbf{y}$ is a \textit{column-semistandard skew tableau}.
\end{enumerate}
For $\bgl\in \mpn$, the \textit{($\bk,\bl$)-semistandard tableau} $\ft$ of shape $\bgl$ is a filling of boxes of $\bgl$ with variables  $\mathbf{z}$ such that its $i$-component $\ft^{(i)}$  is a $(k_i,\ell_i)$-semistandard tableau filled variables $\bx^{(i)}$ and $\by^{(i)}$ for $i=1, \ldots, m$.


We denote by $\mathrm{std}_{\bk|\bl}(\bgl)$ the set  of $(\bk,\bl)$-semistandard tableaux of shape $\bgl$ and by  $s_{\bk|\bl}(\bgl)$ its cardinality. Thanks to \cite[\S\,2]{B-Regev} and \cite[Lemma~4.2]{BKK}, $s_{\bk|\bl}(\bgl)\neq 0$ if and only if $\bgl\in H(\bk|\bl;m,n)$.

For a $(\bk,\bl)$-semistandard tableau $\ft$ of shape $\bgl$,  we define
\begin{equation}\label{Equ:Schur=tableau}
  \ft(\bx/\by)=\prod_{i=1}^{m}\prod_{(a,b)\in\lambda^{(i)}}(-1)^{\overline{z}_{a,b}}z_{a,b}
\end{equation}
where $z_{a,b}\in \mathbf{z}$ is the variable filled in the box  $(a,b)$ of $\ft^{(i)}$. It follows from \cite{B-Regev} and \cite[Equ.~(4.9)]{King} that for $\bgl\in H(\bk|\bl;m,n)$, we have
\begin{equation*}
  S_{\bgl}(\bx/\by)=\sum_{\ft\in\mathrm{std}_{\bk|\bl}(\bgl)}\ft(\bx/\by).
\end{equation*}
\end{point}

The following fact clarifies the relationship between supersymmetric Schur functions and power sum supersymmetric functions indexed by multipartitions, which is a cyclotomic version of \cite[(4.4)]{King} and may serve as a definition of supersymmetric Schur functions associated to (hook)mutlipartitions.
\begin{proposition}[\protect{\cite[Proposition~3.8]{Z-Frob}}]\label{Prop:super-Schur-power}Let $Z_{\bmu}$ be the order of the centralizer in $W_{m,n}$ of an element of cycle type $\bmu$. Then
  \begin{equation*}
    S_{\bgl}(\bx/\by)=\sum_{\bmu\in\mpn}Z_{\bmu}^{-1}\chi_1^{\bgl}(\bmu)P_{\bmu}(\bx/\by).
  \end{equation*}
  \end{proposition}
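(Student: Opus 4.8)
The plan is to reduce the supersymmetric identity to its classical ($\by=\emptyset$) counterpart---the Frobenius characteristic formula for the wreath product $W_{m,n}\cong(\bz/m\bz)\wr\fS_n$---and then transport that formula through the ``difference of alphabets'' substitution that turns ordinary symmetric functions into supersymmetric ones. This isolates the cyclotomic/multipartition combinatorics (which is classical and due to Specht, cf.\ \cite{S,JK}) from the passage to the super world (which is formal). Concretely, I would first work in a universal ring: let $R$ be the polynomial algebra over $\mathbb{Q}[\varsigma]$ freely generated by abstract power sums $p_t^{(j)}$ for $1\le j\le m$, $t\ge 1$, one family per color $j$, and put $P_t^{(i)}=\sum_{j=1}^m\varsigma^{-ij}p_t^{(j)}$ and $P_{\bmu}=\prod_{i=1}^m\prod_k P_{\mu_k^{(i)}}^{(i)}$. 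Writing $s_{\lambda^{(i)}}^{(i)}$ for the Schur function $\sum_{\nu}Z_\nu^{-1}\chi_1^{\lambda^{(i)}}(\nu)\,p_\nu^{(i)}$ expressed in the color-$i$ generators via the classical Frobenius expansion, the claim to establish in $R$ is
\[
\prod_{i=1}^m s_{\lambda^{(i)}}^{(i)}=\sum_{\bmu\in\mpn}Z_{\bmu}^{-1}\,\chi_1^{\bgl}(\bmu)\,P_{\bmu}.
\]
This is exactly the image of the irreducible character $\chi_1^{\bgl}$ under the characteristic map for $W_{m,n}$, i.e.\ Shoji's (non-super) Frobenius formula \cite{S} (equivalently Specht's character formula \cite{JK} together with the characteristic map); I would quote it directly, or prove it by expanding each factor through the classical expansion and regrouping the color-pure $p^{(j)}$ into the character-twisted $P^{(i)}$ via the orthogonality relation $\sum_{j}\varsigma^{(i'-i)j}=m\,\delta_{i,i'}$.

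Next I would specialize. Since the $p_t^{(j)}$ are algebraically independent in $R$, there is a unique $\mathbb{Q}[\varsigma]$-algebra homomorphism $\Phi\colon R\to\bigotimes_{j=1}^m\Lambda_{k_j,\ell_j}$ with $\Phi\bigl(p_t^{(j)}\bigr)=p_t(\bx^{(j)})-p_t(\by^{(j)})=p_t(\bx^{(j)}/\by^{(j)})$; no compatibility needs checking because $R$ is free. By the Berele--Regev/King description of supersymmetric Schur functions as the images of ordinary Schur functions under this substitution (the skew-Schur definition recalled above), $\Phi\bigl(s_{\lambda^{(i)}}^{(i)}\bigr)=S_{\lambda^{(i)}}(\bx^{(i)}/\by^{(i)})$, so $\Phi$ carries the left-hand side to $S_{\bgl}(\bx/\by)$. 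By linearity $\Phi\bigl(P_t^{(i)}\bigr)=\sum_{j}\varsigma^{-ij}p_t(\bx^{(j)}/\by^{(j)})=P_t^{(i)}(\bx/\by)$, whence $\Phi(P_{\bmu})=P_{\bmu}(\bx/\by)$, while the scalar coefficients $Z_{\bmu}^{-1}\chi_1^{\bgl}(\bmu)$ are untouched. Applying $\Phi$ to the universal identity then yields the Proposition.

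The hard part is the universal identity itself. If one imports Shoji's formula it is immediate, but an internally self-contained derivation requires careful bookkeeping of constants: the Fourier-type change of basis from the color-pure $p^{(j)}$ to the character-twisted $P^{(i)}$ produces factors of $m$ through the matrix $(\varsigma^{-ij})$, and these must cancel precisely against the factors $m^{\ell(\mu^{(i)})}$ in the wreath-product centralizer order $Z_{\bmu}=\prod_{i=1}^m Z_{\mu^{(i)}}\,m^{\ell(\mu^{(i)})}$, so that the products $\prod_i\chi_1^{\lambda^{(i)}}$ of symmetric-group characters reassemble into the single value $\chi_1^{\bgl}(\bmu)$ as dictated by Specht's formula. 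By contrast, the supersymmetric specialization is purely formal once the Berele--Regev/King identification $\Phi(s_\lambda)=S_\lambda(\bx/\by)$ is granted.
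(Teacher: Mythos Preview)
Your proposal is sound, but note that the paper does not actually prove this proposition: it is imported wholesale from \cite[Proposition~3.8]{Z-Frob} and presented as background (``a cyclotomic version of \cite[(4.4)]{King}\ldots may serve as a definition''). So there is no in-paper proof to compare against.

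On its own merits your argument is correct and is the natural one. The two ingredients you isolate are exactly right: (i) the classical Frobenius/characteristic formula for the wreath product $W_{m,n}$, which in the free polynomial ring on the $p_t^{(j)}$ reads $\prod_i s_{\lambda^{(i)}}^{(i)}=\sum_{\bmu}Z_{\bmu}^{-1}\chi_1^{\bgl}(\bmu)P_{\bmu}$ (this is Specht's formula as in \cite{JK}, or equivalently the $q=1$, $\by=\emptyset$ case of Shoji's formula \cite{S}); and (ii) the plethystic specialization $p_t^{(j)}\mapsto p_t(\bx^{(j)})-p_t(\by^{(j)})$, under which $s_\lambda\mapsto S_\lambda(\bx/\by)$ by the very formula \cite[(4.4)]{King} that the paper cites. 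Since the identity is polynomial in the free generators $p_t^{(j)}$, the specialization is automatic. Your remark about the cancellation of the Fourier normalization $m^{\ell(\bmu)}$ against the wreath-product centralizer order $Z_{\bmu}=\prod_i m^{\ell(\mu^{(i)})}Z_{\mu^{(i)}}$ is the one place where the computation needs care, and you have identified it correctly.
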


Now we are going to define a deformation of power sum supersymmetric functions indexed by multipartitions. We need the following notations.

 For positive integer $t$, we let \begin{eqnarray*}
&&\mathscr{I}(t;k|\ell)=\{\bi=(i_1, \ldots, i_t)|1\leq i_a\leq k+\ell,a=1,\ldots,t\},\\
          &&\mathscr{I}_{\leq}(t;k|\ell)=\{\bi=(i_1, \ldots, i_t)|1\leq i_1\leq i_2\leq\cdots\leq i_t\leq k+\ell\}.
    \end{eqnarray*}
For $i=1, \ldots, k+\ell$, we define its color $c(i)=a$ whenever $d_{a-1}< i\leq d_a$ for some $1\leq a\leq m$. Given a sequence $\mathbf{i}=(i_1, \ldots, i_t)$ of length $\ell(\mathbf{i})=t$, we define its color $c(\mathbf{i})$ to be the color of $\max\{i_1, \ldots, i_t\}$ and write $\mathbf{z}_{\mathbf{i}}=(-1)^{\bar{\mathbf{i}}}z_{i_1}z_{i_2}\cdots z_{i_t}$. Further,  for $a=0,1$, we denote by $\ell_a(\mathbf{i})$ (resp. $\ell_a(\mathbf{i}_<)$) the number of $i_b$  (resp. $i_b<i_{b+1}$) in $\mathbf{i}$ with parity $\bar{a}$.

Now we can define a deformation of $P_{\bmu}(\bx/\by)$. For integer $t\geq 1$ and $a=1, \ldots, m$, let
\begin{equation*}\label{Equ:q-t}
 q_{t}^{(a)}(\bx/\by;q,\mathbf{Q})=\sum_{\mathbf{i}\in\mathscr{I}_{\leq}(t;k|\ell)}Q^a_{c(\mathbf{i})}
 (-q^{-1})^{\ell_1(\mathbf{i})-\ell_1(\mathbf{i}_<)}q^{\ell_0(\mathbf{i})-\ell_0(\mathbf{i}_<)}
 (q-q^{-1})^{\ell(\mathbf{i}_<)-1}\mathbf{z}_{\mathbf{i}}.
\end{equation*}

For $\bmu=(\mu^{(1)}, \ldots, \mu^{(m)})\in\mpn$, we define
\begin{equation}\label{Equ:Def-q-mu}
  q_{\bmu}(\bx/\by;q,\mathbf{Q})=\sum_{i=1}^m\prod_{j=1}^{\ell(\mu^{(i)})}q^{(i)}_{\mu_j^{(i)}}(\bx/\by;
  q,\mathbf{Q}).
\end{equation}
Note that $q_{\bmu}(\bx/\by;1,\boldsymbol{\varsigma})=P_{\bmu}(\bx/\by)$ with $\boldsymbol{\varsigma}=(\varsigma,\varsigma^2,\ldots, \varsigma^m)$.

In \cite[Theorem~4.11]{Z-Frob}, we obtain the following super Frobenius formula for characters of $H$,
\begin{equation}\label{Equ:char-Schur}
    q_{\bmu}(\bx/\by;q,\bq)=\sum_{\bgl\in\mpn}\chi_q^{\bgl}(\bmu)S_{\bgl}(\bx/\by),
  \end{equation}
  for each $\bmu\in\mpn$.

\begin{point}{}*\label{Subsec:identity}The remain of the section devotes to clarify the relationship between $q_{t}^{(a)}(\bx/\by;q,\mathbf{Q})$ and the super Hall-Liitlewood functions. For integers $a,b$, we denote by $a^b$ the sequence $(a,\ldots, a)$ of length $b$. Then $\bi=(i_1, \ldots, i_t)\in \mathscr{I}_{\leq}(t;k|\ell)$ may be written uniquely as
\begin{equation*}\label{Equ:i=(alpha;beta)}
 \bi(\alpha;\beta)\!\!=\!\!\left(\!1^{\alpha^{(\!1\!)}_1}\!\cdots k_1^{\alpha^{(\!1\!)}_{k_1}}(\!k_1\!\!+\!\!1\!)^{\beta^{(\!1\!)}_1}\!\cdots d_1^{\beta^{(\!1\!)}_{\ell_1}}\cdots\cdots(\!d_{m\!-\!1}\!\!+\!\!1\!)^{\alpha^{(\!m\!)}_1}\!\cdots (\!d_{m}\!\!-\!\!\ell_{m}\!)^{\alpha^{(\!m\!)}_{k_m}}
 (\!d_{m\!-\!1}\!\!+\!\!1\!)^{\beta^{(\!m\!)}_1}\!\cdots d_m^{\beta^{(m)}_{\ell_m}} \!\right)
\end{equation*}for some $(\alpha;\beta)\in\mathscr{C}(t;k|\ell)$, where $\alpha=(\alpha^{(1)};\ldots;\alpha^{(m)})$, $\beta=(\beta^{(1)};\ldots;\beta^{(m)})$ and $\mathscr{C}(t;k+\ell)$ is the set of compositions of $t$ with at most $(k+\ell)$-parts, that is,
 \begin{align*}
  \mathscr{C}(t;k+\ell)=\left.\left\{(\alpha;\beta)=\left(\alpha^{(1)}_1, \ldots, \alpha^{(m)}_{k_m};\beta^{(1)}_{1},\cdots, \beta_{\ell_m}^{(m)}\right)\right||\alpha|+|\beta|=t\right\}.
 \end{align*}
 Thus we may and will identify $\mathscr{I}^{+}(t;k|\ell)$ with $\mathscr{C}(t;k+\ell)$ and write $\mathbf{i}=(\alpha;\beta)$ when $\mathbf{i}=\mathbf{i}(\alpha;\beta)$. Clearly, for $\mathbf{i}=(\alpha;\beta)$, we have $\ell_0(\mathbf{i})=|\alpha|$, $\ell_0(\mathbf{i}_<)=\ell(\alpha)$, $\ell_1(\mathbf{i})=|\beta|$, $\ell_1(\mathbf{i}_<)=\ell(\beta)$, $\mathbf{z}_{\mathbf{i}}=\mathbf{x}^a(-\mathbf{y})^{\beta}$, $c(\mathbf{i})$ is the maximal $b$ with $(\alpha^{(b)},\beta^{(b)})\neq \emptyset$ and we write $\mathbf{Q}_{c(\mathbf{i})}=\mathbf{Q}_{(\alpha;\beta)}$. Therefore
 \begin{equation*}
 q_{t}^{(a)}(\bx/\by;q,\mathbf{Q})=\sum_{(\alpha;\beta)\in\mathscr{C}(t;k|\ell)}Q^a_{(\alpha;\beta)}
  \tilde{q}_{(\alpha;\beta)}(\bx/\by;q),
\end{equation*}
where \begin{equation*}\label{Equ:tilde-q}
  \tilde{q}_{(\alpha;\beta)}(\bx/\by;q)=(\!-q^{-1})^{|\beta|-\ell(\beta)}q^{|\alpha|-\ell(\alpha)}
   (q\!-\!q^{-\!1})^{\ell(\alpha;\beta)-1}\bx^{\alpha}(\!-\by)^{\beta}.
\end{equation*}

In  \cite[Equ.~(4.7)]{Z-Frob}, we show
\begin{equation}\label{Equ:q-q}
  \sum_{(\alpha;\beta)\in\mathscr{C}(t;k|\ell)}\tilde{q}_{(\alpha;\beta)}(\bx/\by;q)=
  \frac{q^t}{q-q^{-1}}q_t(\bx/\by;q^{-2}).
\end{equation}
where $q_t(\bx/\by;q^{-2})$ is the super Hall-Littlewood function (see \cite[\S2]{M2010}).

For $\mathbf{c}=(c_1,\ldots, c_m)\in\mathscr{C}(t;m)$, we let $\mathbf{Q}_{\mathbf{c}}=Q_b$ if $b$ is the largest number $b$ such that $c_b\neq 0$ and write $(\alpha;\beta)\in \mathbf{c}$ if $(\alpha;\beta)\in\mathscr{C}(t;k+\ell)$ satisfies $|\alpha^{(i)}|+|\beta^{(i)}|=c_i$ for $i=1, \ldots, m$. In \cite[Proposition~4.9]{Z-Frob}, we show the function $q_{t}^{(a)}(\bx/\by;q,\mathbf{Q})$ can be expressed as following
\begin{equation*}
  q_t^{(a)}(\bx/\by;q,\bq)=\frac{q^t}{q-q^{-1}}\sum_{(\alpha;\beta)\in\mathscr{C}(t;k+\ell)}
  Q_{\bc}^a\prod_{j=1}^mq_{c_j}\left(\bx^{(j)}/\by^{(j)};q^{-2}\right).
\end{equation*}
 \end{point}
\section{The combinatorial formula of $q_{\bmu}(\bx/\by;q,\bq)$}\label{Sec:mu-weight-i}
In this section, we define the $\bmu$-weight of parity-integer sequences  and interpret the function $q_{\bmu}(\bx/\by;q,\bq)$ as a sum over $\bmu$-weighted parity-integer sequences.

\begin{definition}For $\mathbf{i}=(i_1,\ldots, i_t)\in \mathscr{I}(t;k|\ell)$, we say that $\mathbf{i}$ is \textit{up-down}
 if there exists a $p$ such that  \begin{eqnarray*}i_1<\cdots<i_p\geq i_{p+1}\geq \cdots\geq i_t\text{ for some} \text{ with }0\leq a<t.\end{eqnarray*}We say that $i_{p+1}$ is the \textit{peak} of the up-down sequence $\mathbf{i}$ and write $p(\mathbf{i})=p$.
\end{definition}

Given $\mathbf{i}=(i_1,\ldots, i_t)\in \mathscr{I}(t;k|\ell)$, we denote by $\ell_a(\mathbf{i})$ the number of $i_j$ in $\mathbf{i}$ with parity $\bar{a}$ for $a=0,1$ and write $\ell(\mathbf{i})$ the length of $\mathbf{i}$. Clearly we have $\ell_0(\mathbf{i})+\ell_1(\mathbf{i})=\ell(\mathbf{i})$.
For a up-down sequence $\mathbf{i}=(i_1, \ldots,i_t)$ with $p(\mathbf{i})=p$, we let $\ell_{a}(\mathbf{i}_<)$ be the number of $i_j$ in $\{i_1, \ldots, i_p\}$ with parity $\bar{a}$ for $a=0,1$.
\begin{definition}\label{Def:weight}For a sequence $\mathbf{i}=(i_1, \ldots,i_t)\in \mathscr{I}(t;k|\ell)$, we define its weight\begin{equation*}\mathrm{wt}(\mathbf{i})\!=\!\left\{\begin{array}{ll}
0, & \hbox{if } \mathbf{i}\text{ is not up-down}; \\
(\!-q^{-\!1}\!)^{\ell_1(\mathbf{i})+\ell_0(\mathbf{i}_<)-\ell_0(\mathbf{i}_<)}
q^{\ell_0(\mathbf{i})+\ell_1(\mathbf{i}_<)-\ell_0(\mathbf{i}_<)-1}, & \hbox{if } \mathbf{i} \text{ is up-down with } p(\mathbf{i})=a.\end{array}\right.
\end{equation*}
\end{definition}
Note that if $\ell(\mathbf{i})=1$ then $\mathrm{wt}(\mathbf{i})=1$ when $\bar{\mathbf{i}}=\bar{0}$;  otherwise $\mathrm{wt}(\mathbf{i})=-q^{-2}$.

\begin{remark}If $\ell=0$ then Definition~\ref{Def:weight} reduces to the one defined in Ram's work \cite[Lemma~1.5]{Ram98} (see also \cite[Equ.~(2.13)]{CHM}).
\end{remark}

\begin{lemma}\label{Lemm:wt-seq}
  Given $\mathbf{i}=(i_1,\ldots,i_t)\in \mathscr{I}_{\leq}(t;k|\ell)$ and let $\mathfrak{S}_{\mathbf{i}}$ denote the set of all distinct permutations of $\mathbf{i}$. Then
  \begin{equation*}
    (-q)^{\ell_1(\mathbf{i}_{<})-\ell_1(\mathbf{i}_{=})}q^{\ell_0(\mathbf{i}_{=})-\ell_0(\mathbf{i}_{<})}
   (q\!-\!q^{-\!1})^{\ell(\mathbf{i})-1}=\sum_{\sigma\in\mathfrak{S}_{\mathbf{i}}}
    \mathrm{wt}(\sigma(\mathbf{i})),
  \end{equation*}
  where $\ell_s(\mathbf{i}_{=})$ (resp. $\ell_s(\mathbf{i}_{<})$) is the number of $i_j\in\mathbf{i}$ with parity $\bar{s}$ such that $i_j<i_{j+1}$ (resp. $i_j=i_{j+1}$), $s=0,1$.
\end{lemma}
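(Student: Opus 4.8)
The plan is to run Ram's symmetrization argument \cite[Lemma~1.5]{Ram98} (as used in \cite{CHM}) and then add the bookkeeping forced by the two parities. First I would note that only \emph{up-down} words contribute, since $\mathrm{wt}(\sigma(\mathbf{i}))=0$ otherwise, so the right-hand side is really a sum over the up-down rearrangements of $\mathbf{i}$. The structural heart of the proof is that an up-down word is determined by its strictly increasing initial run: writing $v_1<\cdots<v_r$ for the distinct values occurring in $\mathbf{i}$, every up-down rearrangement is a strictly increasing ``run-up'' using one copy of each value from some subset $S\subseteq\{v_1,\dots,v_r\}$, followed by the weakly decreasing ``run-down'' that absorbs all remaining entries. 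Because the peak is the global maximum, $S$ must contain $v_r$, and conversely each such $S$ yields exactly one up-down word $\mathbf{j}_S$ (of peak position $|S|$). This gives a bijection between the up-down words in $\mathfrak{S}_{\mathbf{i}}$ and the subsets $S$ with $v_r\in S\subseteq\{v_1,\dots,v_r\}$.

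Next I would evaluate $\mathrm{wt}(\mathbf{j}_S)$ on this parametrization. By Definition~\ref{Def:weight} the sign and the power of $q$ depend only on the peak position and on the parities of the entries in the run-up versus the run-down; hence $\mathrm{wt}(\mathbf{j}_S)$ factors as the base weight $\mathrm{wt}(\mathbf{j}_{\{v_r\}})$ of the fully decreasing word times one independent contribution for each value of $S\setminus\{v_r\}$. Concretely, promoting a non-maximal value $v$ into the run-up multiplies the weight by a factor $g_v$ depending only on the parity of $v$; for an \emph{even} value a direct computation gives $1+g_v=1-q^{-2}=q^{-1}(q-q^{-1})$, and one checks the analogous odd identity, so that each of the $r-1$ free values contributes precisely one factor of $(q-q^{-1})$ together with an explicit monomial in $q$ and a sign fixed by its parity.

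I would then sum over $S$ by expanding the product,
\[
\sum_{\sigma\in\mathfrak{S}_{\mathbf{i}}}\mathrm{wt}(\sigma(\mathbf{i}))
=\sum_{v_r\in S\subseteq\{v_1,\dots,v_r\}}\mathrm{wt}(\mathbf{j}_S)
=\mathrm{wt}(\mathbf{j}_{\{v_r\}})\prod_{i=1}^{r-1}\bigl(1+g_{v_i}\bigr),
\]
which is exactly the binomial collapse $\sum_{s}\binom{r-1}{s-1}(-q^{-2})^{s-1}=(1-q^{-2})^{r-1}$ underlying the $\ell=0$ case. Regrouping the $r-1$ factors of $(q-q^{-1})$ and collecting the leftover powers of $q$ and the signs according to parity produces the three factors on the displayed left-hand side: the power of $(q-q^{-1})$, the even contribution $q^{\ell_0(\mathbf{i}_{=})-\ell_0(\mathbf{i}_{<})}$, and the odd contribution $(-q)^{\ell_1(\mathbf{i}_{<})-\ell_1(\mathbf{i}_{=})}$. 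When $\ell=0$ every value is even and the identity reduces to \cite[Lemma~1.5]{Ram98}, which is the sanity check I would run first.

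The step I expect to be the genuine obstacle is the parity bookkeeping in the last two paragraphs. One must verify that the per-value factor $1+g_v$ is uniformly $(q-q^{-1})$ up to an explicit monomial for \emph{both} parities, and that the base weight $\mathrm{wt}(\mathbf{j}_{\{v_r\}})$ carries exactly the residual powers of $q$ and the residual sign; the subtle point is that the parity of the forced maximal value $v_r$ (even versus odd) shifts these residual exponents, and that odd entries behave oppositely in the run-up and run-down. Checking that everything recombines into the stated exponents $\ell_0(\mathbf{i}_{=})-\ell_0(\mathbf{i}_{<})$ and $\ell_1(\mathbf{i}_{<})-\ell_1(\mathbf{i}_{=})$ is where real care (rather than routine algebra) is needed; the boundary values $\ell(\mathbf{i})=1$ recorded after Definition~\ref{Def:weight} furnish the base of this verification, and an induction on the number $r$ of distinct values — peeling off $v_r$ — is the natural fallback if the closed-form product is awkward to manage directly.
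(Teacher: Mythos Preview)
Your proposal is correct and follows essentially the same approach as the paper. The paper also parametrizes the up-down rearrangements of $\mathbf{i}$ by subsets $A$ of the set $D$ of non-maximal distinct values (your $S\setminus\{v_r\}$), splits $A=A_0\cup A_1$ by parity, writes $\mathrm{wt}(\mathbf{j}_A)$ explicitly as a monomial in $q$ depending only on $(a_0,a_1)=(|A_0|,|A_1|)$, and then evaluates the double binomial sum $\sum_{a_0}\sum_{a_1}\binom{|D_0|}{a_0}\binom{|D_1|}{a_1}(\cdots)$ directly; your product $\prod_{i<r}(1+g_{v_i})$ is exactly this sum in factored form, so the ``genuine obstacle'' you flag is handled in the paper by the same explicit computation you outline.
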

\begin{proof}We adapt the argument of the proof of \cite[Lemma~1.5]{Ram98}. Let $\tilde{D}$ be the set of distinct elements in the sequence $\mathbf{i}$ and let $D=\tilde{D}-\{i_t\}$. For $s=0,1$, let $D_s$ be the subset of $D$ consisting of elements with parity $\bar{s}$. For each subset $A$ of $D$, we have a disjoint decomposition $A=A_0\cup A_1$ with $A_s\subseteq D_s$ for $s=0,1$. We let  $a_s=|A_s|$ and $a=|A|$, $j_1<j_1<\cdots<j_a$ the elements of $A$ in increasing order (clearly $j_a=i_t$) and let $j_{a+2}\geq j_{a+3}\geq \cdots\geq j_{t}$ be the remainder of the elements of the sequence $\mathbf{i}$ arranged in decreasing order. In this way, we obtain a up-down sequence
\begin{equation*}
 \mathbf{j}=(j_1<\ldots<j_a\geq j_{a+1}\geq j_{a+2}\geq j_{a+3}\geq \cdots\geq j_{t})
\end{equation*}
 with $p(\mathbf{j})=a$. According to Definition~\ref{Def:weight},
 \begin{equation*}
   \mathrm{wt}(\mathbf{j})=(-q)^{a_1-\ell_1(\mathbf{i})-a_0}q^{\ell_0(\mathbf{i})+a_1-1-a_0}
 \end{equation*}

Note that for $\sigma\in \mathfrak{S}_{\mathbf{i}}$, $\mathrm{wt}(\sigma(\mathbf{i}))\neq 0$ if and only if there exists an $a$ such that $0\leq a< t$ and
  \begin{equation*}
    i_{\sigma(1)}<\cdots<i_{\sigma(a)}<i_{\sigma(a+1)}\geq i_{\sigma(a+2)}\geq \cdots\geq i_{\sigma(t)}.
  \end{equation*}

It follows from this that every $\sigma\in \mathfrak{S}_{\mathbf{i}}$ such that $\mathrm{wt}(\sigma(\mathbf{i}))\neq 0$ is given by a unique subset $A$ of $\tilde{D}$. Thus
\begin{eqnarray*}
\sum_{\sigma\in\mathfrak{S}_{\mathbf{i}}}
    \mathrm{wt}(\sigma(\mathbf{i}))&=&\sum_{A_0\subseteq D_0,A_1\subseteq D_1}(-q)^{a_1-\ell_1(\mathbf{i})-a_0}q^{\ell_0(\mathbf{i})+a_1-1-a_0}  \\
   &=& \sum_{a_0=0}^{\ell_0(\mathbf{i}_<)}\sum_{a_1=0}^{\ell_1(\mathbf{i}_<)}
   \tbinom{\ell_0(\mathbf{i}_<)}{a_0}\tbinom{\ell_1(\mathbf{i}_<)}{a_1}
  (-q)^{a_1-\ell_1(\mathbf{i})-a_0}q^{\ell_0(\mathbf{i})+a_1-1-a_0}\\
  &=& \sum_{a_0=0}^{\ell_0(\mathbf{i}_<)}
  \tbinom{\ell_0(\mathbf{i}_<)}{a_0}(-q)^{a_0}q^{\ell_0(\mathbf{i})-1-a_0}
  \biggl(\sum_{a_1=0}^{\ell_1(\mathbf{i}_<)}
   \tbinom{\ell_1(\mathbf{i}_<)}{a_1}
  q^{a_1}(-q)^{a_1-\ell_1(\mathbf{i})}\biggr)\\
  &=& (-q)^{\ell_1(\mathbf{i}_<)-\ell_1(\mathbf{i})}(q-q^{-1})^{\ell_1(\mathbf{i}_<)}
  \sum_{a_0=0}^{\ell_0(\mathbf{i}_<)}
  \tbinom{\ell_0(\mathbf{i}_<)}{a_0}(-q)^{a_0}q^{\ell_0(\mathbf{i})-1-a_0}\\
  &=&(-q)^{\ell_1(\mathbf{i}_{<})-\ell_1(\mathbf{i})}q^{\ell_0(\mathbf{i})-\ell_0(\mathbf{i}_{<})}
   (q\!-\!q^{-\!1})^{\ell(\mathbf{i})-1}.
\end{eqnarray*}
It completes the proof.
\end{proof}

Note that $\mathrm{wt}(\mathbf{i})$ is zero unless $\mathbf{i}$ is up-down, i.e., $\mathbf{i}=(\alpha;\beta)$ with $(\alpha;\beta)\in \mathscr{C}(t;k+\ell)$ for some $t$ (see \S\ref{Subsec:identity}). Thanks to Equ.~(\ref{Equ:tilde-q}) and Lemma~\ref{Lemm:wt-seq}, we obtain
\begin{equation*}
    \mathrm{wt}(\alpha;\beta):=(\!-q^{-1})^{|\beta|-\ell(\beta)}q^{|\alpha|-\ell(\alpha)}
   (q\!-\!q^{-\!1})^{\ell(\alpha;\beta)-1}=\sum_{\sigma\in\mathfrak{S}_{\mathbf{i}}}
    \mathrm{wt}(\sigma(\mathbf{i})).
  \end{equation*}
or equivalently,
\begin{equation*}\label{Equ:weight=partition}
 \sum_{\sigma\in\mathfrak{S}_{\mathbf{i}}}
    \mathrm{wt}(\sigma(\mathbf{i}))\mathbf{z}_{\sigma(\mathbf{i})}=
    \tilde{q}_{(\alpha;\beta)}(\bx/\by;q),
\end{equation*}
due to $\mathbf{z}_{\sigma(\mathbf{i})}=\mathbf{z}_{\mathbf{i}}=\bx^{\alpha}(-\by)^{\beta}$ for all $\sigma\in\mathfrak{S}_{\mathbf{i}}$.

As a consequence, we can rephrase the function $q_t^{(a)}(\bx/\by;q,\bq)$ as following
\begin{corollary}\label{Cor:q-wt}
  For integers $t\geq 1$ and $1\leq a\leq m$, we have
  \begin{eqnarray*}q^{(a)}_t(\bx/\by;q,\bq) &=&\sum_{(\alpha;\beta)\in\mathscr{C}(t;k+\ell)}\mathrm{wt}(\alpha;\beta)Q_{(\alpha;\beta)}^a \bx^{\alpha}(-\by)^{\beta}.\end{eqnarray*}
\end{corollary}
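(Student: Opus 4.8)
The plan is to obtain Corollary~\ref{Cor:q-wt} by substituting the expansion of Lemma~\ref{Lemm:wt-seq} directly into the definition of $q_t^{(a)}(\bx/\by;q,\mathbf{Q})$. First I would recall the starting point, namely the defining sum
\begin{equation*}
 q_{t}^{(a)}(\bx/\by;q,\mathbf{Q})=\sum_{\mathbf{i}\in\mathscr{I}_{\leq}(t;k|\ell)}Q^a_{c(\mathbf{i})}
 (-q^{-1})^{\ell_1(\mathbf{i})-\ell_1(\mathbf{i}_<)}q^{\ell_0(\mathbf{i})-\ell_0(\mathbf{i}_<)}
 (q-q^{-1})^{\ell(\mathbf{i}_<)-1}\mathbf{z}_{\mathbf{i}},
\end{equation*}
where the sum runs over weakly increasing sequences. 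The key observation is that each weakly increasing $\mathbf{i}$ is exactly $\mathbf{i}(\alpha;\beta)$ for a unique $(\alpha;\beta)\in\mathscr{C}(t;k+\ell)$, as recorded in \S\ref{Subsec:identity}, and that under this identification one has $\ell_0(\mathbf{i})=|\alpha|$, $\ell_0(\mathbf{i}_<)=\ell(\alpha)$, $\ell_1(\mathbf{i})=|\beta|$, $\ell_1(\mathbf{i}_<)=\ell(\beta)$, $c(\mathbf{i})=c(\alpha;\beta)$, and $\mathbf{z}_{\mathbf{i}}=\bx^{\alpha}(-\by)^{\beta}$. Thus I would first rewrite the coefficient of $\mathbf{z}_{\mathbf{i}}$ in terms of $(\alpha;\beta)$; comparing with the formula for $\tilde{q}_{(\alpha;\beta)}(\bx/\by;q)$ in \S\ref{Subsec:identity}, the scalar factor is precisely $\mathrm{wt}(\alpha;\beta)$ as defined immediately before the corollary.

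The second ingredient is the reindexing via Lemma~\ref{Lemm:wt-seq}. The lemma says that for a fixed weakly increasing $\mathbf{i}$, summing $\mathrm{wt}(\sigma(\mathbf{i}))$ over all distinct permutations $\sigma\in\mathfrak{S}_{\mathbf{i}}$ reproduces exactly the scalar $\mathrm{wt}(\alpha;\beta)$ attached to $\mathbf{i}=\mathbf{i}(\alpha;\beta)$. Since $\mathbf{z}_{\sigma(\mathbf{i})}=\mathbf{z}_{\mathbf{i}}=\bx^{\alpha}(-\by)^{\beta}$ is constant across the permutation orbit, and since $Q^a_{c(\mathbf{i})}$ depends only on the multiset of entries and hence also is constant on the orbit, I can move the weight and the monomial inside a single sum over permutations. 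This is the displayed identity
\begin{equation*}
 \mathrm{wt}(\alpha;\beta)\,Q_{(\alpha;\beta)}^a\,\bx^{\alpha}(-\by)^{\beta}
 =\sum_{\sigma\in\mathfrak{S}_{\mathbf{i}}}\mathrm{wt}(\sigma(\mathbf{i}))\,Q_{(\alpha;\beta)}^a\,\mathbf{z}_{\sigma(\mathbf{i})}
\end{equation*}
that bridges the two forms of the corollary; it is essentially the remark recorded right after Lemma~\ref{Lemm:wt-seq}, decorated with the color factor $Q^a$.

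Finally I would assemble the pieces. Starting from $q_t^{(a)}$ written as $\sum_{(\alpha;\beta)}\mathrm{wt}(\alpha;\beta)Q_{(\alpha;\beta)}^a\bx^{\alpha}(-\by)^{\beta}$, which is already the desired right-hand side, the content of the proof is simply to verify the coefficient identification and then note that running over weakly increasing $\mathbf{i}$ is the same as running over compositions $(\alpha;\beta)\in\mathscr{C}(t;k+\ell)$. The only genuine care needed is bookkeeping: one must confirm that the color $c(\mathbf{i})$ of the maximal entry translates correctly into $c(\alpha;\beta)$ so that $Q^a_{c(\mathbf{i})}=Q^a_{(\alpha;\beta)}$, and that the exponent arithmetic in passing from $(-q^{-1})^{\ell_1(\mathbf{i})-\ell_1(\mathbf{i}_<)}q^{\ell_0(\mathbf{i})-\ell_0(\mathbf{i}_<)}(q-q^{-1})^{\ell(\mathbf{i}_<)-1}$ to $(-q^{-1})^{|\beta|-\ell(\beta)}q^{|\alpha|-\ell(\alpha)}(q-q^{-1})^{\ell(\alpha;\beta)-1}$ is exact. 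I expect this translation of indices and exponents to be the main (though routine) obstacle; there is no substantial analytic difficulty, since the identity is ultimately a direct substitution of Lemma~\ref{Lemm:wt-seq} together with the combinatorial dictionary already established in \S\ref{Subsec:identity}.
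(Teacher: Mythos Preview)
Your proposal is correct. As you note, once the dictionary of \S\ref{Subsec:identity} identifies $\mathscr{I}_{\leq}(t;k|\ell)$ with $\mathscr{C}(t;k+\ell)$ and translates the exponents, the displayed identity is immediate from the definition of $q_t^{(a)}$ together with the definition of $\mathrm{wt}(\alpha;\beta)$ given just before the corollary; Lemma~\ref{Lemm:wt-seq} is not strictly needed for the literal statement.

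The paper's written proof takes a slightly different route: it begins from the sum $\sum_{\mathbf{i}\in\mathscr{I}(t;k|\ell)}\mathrm{wt}(\mathbf{i})Q_{c(\mathbf{i})}^a\mathbf{z}_{\mathbf{i}}$ over \emph{all} sequences, groups the terms into $\mathfrak{S}_{\mathbf{i}}$-orbits, and then invokes Lemma~\ref{Lemm:wt-seq} (plus the invariance of $Q_{c(\mathbf{i})}$ and $\mathbf{z}_{\mathbf{i}}$ under permutation) to collapse each orbit to $\mathrm{wt}(\alpha;\beta)Q_{(\alpha;\beta)}^a\bx^{\alpha}(-\by)^{\beta}$. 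The identification with $q_t^{(a)}$ is then read off from \S\ref{Subsec:identity}. Your direct substitution is shorter for the stated equality, while the paper's detour through arbitrary sequences is what actually feeds into Corollary~\ref{Cor:q-mu-wt}, where one needs $q_{\bmu}$ expressed as a sum over all (not just weakly increasing) sequences weighted by $\mathrm{wt}_{\bmu}$. You do sketch this bridge in your second paragraph, so the content is there; just be aware that in the paper this is the main point of the proof rather than an aside.
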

\begin{proof}Note that $Q_{\mathbf{i}}=Q_{\sigma(\mathbf{i})}$  for all $\sigma\in \mathfrak{S}_{\mathbf{i}}$ and $\mathrm{wt}(\mathbf{i})=0$ unless $\mathbf{i}$ is up-down. Therefore
\begin{eqnarray*}
  \sum_{\mathbf{i}=(i_1,\ldots,i_t)}\mathrm{wt}(\mathbf{i})Q_{c(\mathbf{i})}^a\mathbf{z}_{\mathbf{i}} &=&\sum_{\mathbf{i}\in \mathscr{I}_{\leq}(t;k|\ell)}\sum_{\sigma\in\mathfrak{S}_{\mathbf{i}}}
  \mathrm{wt}(\sigma(\mathbf{i}))Q_{c(\sigma(\mathbf{i}))}^a\mathbf{z}_{\sigma(\mathbf{i})}\\
   &=&\sum_{\mathbf{i}\in \mathscr{I}_{\leq}(t;k|\ell)}Q_{c(\mathbf{i})}^a
  \left(\sum_{\sigma\in\mathfrak{S}_{\mathbf{i}}}\mathrm{wt}(\sigma(\mathbf{i}))\right)\mathbf{z}_{\mathbf{i}}\\
  &=&\sum_{(\alpha;\beta)\in \mathscr{C}(t;k+\ell)}\mathrm{wt}(\alpha;\beta)Q_{(\alpha;\beta)}^a\bx^{\alpha}(-\by)^{\beta}.
 \end{eqnarray*}
 It completes the proof.
\end{proof}

For $\bmu\in\mpn$, we denote by $\mathfrak{t}^{\bmu}$ the standard $\bmu$-tableau  with the numbers $1,2,\dots,n$ entered in order first along the rows (resp. columns) of the first component, and then along the rows (resp. columns) of the second component, and so on. For $i=1, \ldots, n$, we will write $c_{\mathfrak{t}^{\bmu}}(i)=c$ if the number $i$ is entered in the $c$-component of $\mathfrak{t}^{\bmu}$. More generally, given a standard tableau $T$ filling the numbers $1,2,\dots,n$  of shape $\bgl\in\mpn$, we write $c_{T}(i)=c$ when $i$ is entered in the $c$-component of $T$.

\begin{definition}\label{Def:mu-up-down}We say that $\mathbf{i}=(i_1, \ldots, i_n)$ is \textit{$\bmu$-up-down} if it satisfies the following property: if $a, a+1, \ldots, a+r$ is a row of $\mathfrak{t}^{\bmu}$ then the subsequence $(i_{a}, i_{a+1}, \ldots, i_{a+r})$ is up-down. For a $\bmu$-up-down sequence $\mathbf{i}$, we denote by $P_{\mathbf{i}}^{\bmu}$ the set of peaks in $\mathbf{\mathbf{i}}$, one for each row of $\mathfrak{t}^{\bmu}$.\end{definition}

\begin{definition}\label{Def:mu-weight}For a sequence $\mathbf{i}=(i_1, \ldots, i_n)\in \mathscr{I}(n;k|\ell)$, we define its $\bmu$-weight\begin{eqnarray*}\mathrm{wt}_{\bmu}(\mathbf{i})&=&\left\{\begin{array}{ll}
0, & \hbox{if } \mathbf{i}\text{ is not $\bmu$-up-down}; \\
(-q^{-1})^{\ell_1(\mathbf{i}_{\geq})+\ell_0(\mathbf{i}_<)}q^{\ell_0(\mathbf{i}_{\geq})+\ell_1(\mathbf{i}_<)}
\prod_{i\in P_{\mathbf{i}}^{\bmu}}Q_{c(i)}^{c_{\mathfrak{t}^{\bmu}}(i)}, & \hbox{if } \mathbf{i} \text{ is $\bmu$-up-down},\end{array}\right.
\end{eqnarray*}
where $\ell_a(\mathbf{i}_{\geq})$ (resp. $\ell_a(\mathbf{i}_{<})$) is the number of $i_j\geq i_{j+1}$ (resp. $i_j < i_{j+1}$) in the same row of $\mathfrak{t}^{\bmu}$ with parity $\bar{a}$ for $a=0,1$.
\end{definition}
\begin{example}\label{Exam:mu-weight}Let $n=20$, $m=3$, $\bk|\bl=(1|1,1|2,1|3)$ and    $\bmu=\left((2,1,1);(3,2,2,1);(4,3,1)\right)$. Then
\begin{equation*}
  \mathfrak{t}^{\bmu}=\biggl(\,\Tricolor(1&2\cr 3\cr 4 |5&6&7\cr 8&9
  \cr 10&11\cr 12|13&14&15&16
  \cr 17&18&19\cr 20)\,\biggr),
\end{equation*}
the  (row-reading) sequence
\begin{equation*}
  \mathbf{i}=\biggl(\,\Tricolor(1&\underline{3}\cr \underline{2}\cr \underline{4} |6&7&\underline{9}\cr \underline{2}&2
  \cr \underline{5}&4\cr 7|8&6&5&\underline{7}
  \cr 3&4&\underline{6}\cr \underline{8})\,\biggr).
\end{equation*}
 is $\bmu$-up-down and its peaks are underlined numbers. Furthermore, we have
 \begin{eqnarray*}
  \mathrm{wt}_{\bmu}(\mathbf{i})&=&{\biggl(\,\Triwt({(\!-q\!)^{-\!1}}&Q_2\cr Q_1\cr Q_2 |(\!-q\!)^{-\!1}&q&Q_3^2\cr Q_1^2&(\!-q\!)^{-\!1}
  \cr Q_2^2&(\!-q\!)^{-\!1}\cr Q_3^2|{(\!-q\!)^{-\!1}}&q&q&Q_3^3
  \cr {(\!-q\!)^{-\!1}} &q&Q_3^3\cr Q_3^3)\,\biggr)}\\
    &=&q^{-2}Q_1^3Q_2^4Q_3^{13}.
\end{eqnarray*}
\end{example}
A point should be noted is that Definition~(\ref{Equ:Def-q-mu}) of $q_{\bmu}$ can be thought of as product of $q^{(i)}_t$ over the rows of $\mathfrak{t}^{\bmu}$ where $t$ are the length of the rows in the $i$-component $\mathfrak{t}^{\bmu}$. Now we can rewrite the super Frobenius formula for the characters of $H$ as a sum over $\bmu$-weighted parity-integer sequences, which follows directly from Corollary~\ref{Cor:q-wt}.
\begin{corollary}\label{Cor:q-mu-wt}For $\bmu\in\mpn$,
  \begin{equation*}
    q_{\bmu}(\bx/\by;q,\mathbf{Q})=\sum_{\mathbf{i} \text{ is $\bmu$-up-down}}\mathrm{wt}_{\bmu}(\mathbf{i})\mathbf{z}_{\mathrm{i}}.
  \end{equation*}
\end{corollary}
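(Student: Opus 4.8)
The plan is to read the definition (\ref{Equ:Def-q-mu}) of $q_{\bmu}(\bx/\by;q,\bq)$ as the product of the functions $q^{(a)}_t(\bx/\by;q,\bq)$ taken over the rows of the standard tableau $\mathfrak{t}^{\bmu}$, exactly as indicated in the remark preceding the statement: a row of length $t$ lying in the $a$-component contributes one factor $q^{(a)}_t$. For each such factor I would not use the closed form of Corollary~\ref{Cor:q-wt} itself, but rather the intermediate identity established in the course of its proof,
\begin{equation*}
  q^{(a)}_t(\bx/\by;q,\bq)=\sum_{\mathbf{i}\in\mathscr{I}(t;k|\ell)}\mathrm{wt}(\mathbf{i})\,Q^a_{c(\mathbf{i})}\,\mathbf{z}_{\mathbf{i}},
\end{equation*}
which expresses $q^{(a)}_t$ as a sum over \emph{all} length-$t$ sequences, the non-up-down ones contributing $0$ because $\mathrm{wt}$ vanishes on them.

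Next I would expand the product over rows. A term in the expansion is a choice of one sequence $\mathbf{i}^{R}\in\mathscr{I}(t_R;k|\ell)$ for each row $R$ of $\mathfrak{t}^{\bmu}$, and concatenating these in the reading order of $\mathfrak{t}^{\bmu}$ sets up a bijection between such tuples $(\mathbf{i}^{R})_R$ and sequences $\mathbf{i}=(i_1,\dots,i_n)\in\mathscr{I}(n;k|\ell)$, under which the subsequence of $\mathbf{i}$ indexed by the boxes of $R$ is precisely $\mathbf{i}^{R}$. By Definition~\ref{Def:mu-up-down}, $\mathbf{i}$ is $\bmu$-up-down exactly when every $\mathbf{i}^{R}$ is up-down; hence, after expanding, only $\bmu$-up-down sequences survive. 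Since $\mathbf{z}_{\mathbf{i}}=\prod_{R}\mathbf{z}_{\mathbf{i}^{R}}$ (the variables multiply and the parity sign $(-1)^{\bar{\mathbf{i}}}$ is additive along the concatenation), the expansion becomes $\sum_{\mathbf{i}\ \bmu\text{-up-down}}\bigl(\prod_{R}\mathrm{wt}(\mathbf{i}^{R})\,Q^{a_R}_{c(\mathbf{i}^{R})}\bigr)\mathbf{z}_{\mathbf{i}}$.

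It then remains to identify the bracketed scalar with $\mathrm{wt}_{\bmu}(\mathbf{i})$ of Definition~\ref{Def:mu-weight}. For the $Q$-part this is immediate: each row contributes a single factor $Q^{a_R}_{c(\mathbf{i}^{R})}$, where $a_R=c_{\mathfrak{t}^{\bmu}}(i)$ is the component of the row and, the peak $i$ being the maximal entry of the up-down row, $c(\mathbf{i}^{R})=c(i)$ is the color of that peak; thus $\prod_R Q^{a_R}_{c(\mathbf{i}^{R})}=\prod_{i\in P_{\mathbf{i}}^{\bmu}}Q^{c_{\mathfrak{t}^{\bmu}}(i)}_{c(i)}$. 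For the power-of-$q$ part I would compare Definition~\ref{Def:weight} row by row: the single-row weight $\mathrm{wt}(\mathbf{i}^{R})$ assigns to each of the $t_R-1$ adjacent pairs of $\mathbf{i}^{R}$ a factor $-q^{-1}$ or $q$ according to the parity of the lower index and to whether the step is an ascent or a descent, the peak itself being excluded (this is the role of the ``$-1$'' in the exponent of Definition~\ref{Def:weight}, the peak instead carrying the $Q$-factor). Summing these counts over all rows produces exactly the exponents $\ell_1(\mathbf{i}_{\geq})+\ell_0(\mathbf{i}_{<})$ and $\ell_0(\mathbf{i}_{\geq})+\ell_1(\mathbf{i}_{<})$ appearing in Definition~\ref{Def:mu-weight}, so that $\prod_R\mathrm{wt}(\mathbf{i}^{R})=(-q^{-1})^{\ell_1(\mathbf{i}_{\geq})+\ell_0(\mathbf{i}_{<})}q^{\ell_0(\mathbf{i}_{\geq})+\ell_1(\mathbf{i}_{<})}$. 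Combining the two parts gives $\prod_R\mathrm{wt}(\mathbf{i}^{R})Q^{a_R}_{c(\mathbf{i}^{R})}=\mathrm{wt}_{\bmu}(\mathbf{i})$, which is the asserted formula.

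The one point that requires genuine care — and the main, if modest, obstacle — is this last matching of the $q$-exponents: one must verify that the per-row weight of Definition~\ref{Def:weight}, read off from the ascent/descent pattern with the peak removed, reassembles termwise into the global counts $\ell_a(\mathbf{i}_{<})$ and $\ell_a(\mathbf{i}_{\geq})$ of Definition~\ref{Def:mu-weight}, and that the single peak of each row accounts simultaneously for the $q$-power omitted there and for the $Q$-factor inserted. Once the conventions are aligned this is a routine bookkeeping check, and the corollary then follows at once from Corollary~\ref{Cor:q-wt}.
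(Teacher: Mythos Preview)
Your proposal is correct and is exactly the argument the paper has in mind: the paper gives no proof beyond the remark that the definition of $q_{\bmu}$ is a product over the rows of $\mathfrak t^{\bmu}$ and that the statement ``follows directly from Corollary~\ref{Cor:q-wt}''. You have simply written out the bookkeeping (concatenation bijection, matching of the $Q$-factors via the peak, and row-by-row reassembly of the $q$-exponents) that the paper suppresses.
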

\section{RSK superinsertion}\label{Sec:RSK}

In the section, we first review the RSK superinsertion algorithm for partitions, then extend it to work for multipartitions and defines the $\bmu$-weight of standard tableaux, which leads to a new proof of the super Frobenius formula.
\begin{point}{}*
Let
\begin{equation*}
  \mathcal{Z}_{k,\ell}(n)=\left\{z_{\mathbf{j}}=\left.\left(\begin{array}{ccc}1&\cdots& n\\
 z_{j_1}& \cdots&z_{j_n}\\ \end{array} \right)\right|1\leq j_1, \ldots, j_n\leq k+\ell\right\}.
\end{equation*}
Following Berele-Regev \cite[\S2]{B-Regev} (see also \cite[\S4]{DR}), the \textit{RSK  superinsertion} is a bijection from $\mathcal{Z}_{k,\ell}(n)$ to pairs of tableaux $(S,T)$, where $S$ is a $(k,\ell)$-semistandard tableau, $T$ is a standard tableau (which is called the \textit{recording tableau}), and $\mathrm{shape}(S)=\mathrm{shape}(T)=\gl$ for some partition $\lambda$ of $n$. More precisely, for any $z_{\mathbf{j}}\in\mathcal{Z}_{k,\ell}(n)$, the RSK superinsertion constructs the pairs of tableaux $(S,T)=(S(z_{\mathbf{j}}),T(z_{\mathbf{j}}))$ iteratively:
 \begin{equation*}
   (\emptyset,\emptyset)=(S_0,T_0), (S_1,T_1), \ldots, (S_n,T_n)=(S,T)
 \end{equation*}
 by applying the following rules:
\begin{enumerate}
  \item[(1)] $S_i$ is a $(k,\ell)$-semistandard tableau containing $i$ nodes and $T_i$ is a standard tableau with $\mathrm{shape}(T_i)=\mathrm{shape}(S_i)$.

  \item[(2)] $S_i$ is obtained from $S_{i-1}$ by inserting $z_{j_i}$ into $S_{i-1}$ as follows:

  \begin{enumerate}

  \item[(i)] If $z_{j_i}\in\mathbf{x}$ then $P_i$ is obtained from $P_{i-1}$ by column inserting $z_{j_i}$ into $P_{i-1}$ as follows:
           \begin{enumerate}
             \item[(a)] Insert $z_{j_i}$ into the first column of $P_{i-1}$ by displacing the smallest variables $\geq z_{j_i}$; if every variable is $< z_{j_i}$, add $z_{j_i}$ to the bottom of the first column.
             \item[(b)] If $z_{j_i}$ displace $z_{?}$ from the first column, insert $z_{?}$ into the second column using the above rule whenever $z_{?}\in \mathbf{x}$, otherwise insert $z_{?}$ to the first row using the rule (ii).
             \item[(c)] Repeat for each subsequent column, until a variable in $\mathbf{x}$'s  is added to the bottom of some (possible empty) column.
           \end{enumerate}
     \item[(ii)] If $z_{j_i}\in\mathbf{y}$ then $S_{i}$ is obtained from $S_{i-1}$ by row inserting $z_{j_i}$ into $S_{i-1}$ as follows:
           \begin{enumerate}
             \item[(a)] Insert $z_{j_i}$ into the first row of $P_{i-1}$ by displacing the smallest variable $\geq z_{j_i}$; if every variable is $< z_{j_i}$, add $z_{j_i}$ to the right-end of the first row.
             \item[(b)] If $z_{j_i}$ displacing $z_{?}$ from the first row, insert $z_{?}$ into the second row using the above rules (note that here $z_{?}$ must be in $\by$).
             \item[(c)] Repeat for each subsequent row, until a variable in $\mathbf{y}$'s  is added to the right-end of some (possible empty) row.
           \end{enumerate}
         \end{enumerate}
  \item[(3)] $T_i$ is obtained  from $T_{i-1}$ by inserting number $i$ in the newly added box.
\end{enumerate}

 \end{point}

 \begin{example}\label{Exam:1-insertion}The result of inserting  of $\mathbf{z}_{\mathbf{j}}=\left(\begin{array}{ccccccccc}
  1&2&3&4&5&6&7&8&9\\y_1&x_2&x_2&x_1&x_3&y_1&y_1&y_3&y_2
\end{array}\right)$ is
\noindent   \begin{equation*}
 \begin{array}{llllll}
      S:& \diagram(y_1)&{\diagram(x_2& y_1)}&{\diagram(x_2& x_2& y_1)}&
      {\diagram(x_1& x_2&x_2& y_1)}&{\diagram(x_1&x_2& x_2&y_1\cr x_3)}\\  \\
   T:&\diagram(1)&{\diagram(1&2)}&{\diagram(1& 2&3)}&{\diagram(1&2&3&4)}&{\diagram(1&2&3&4\cr 5)}
 \end{array}
 \end{equation*}
 \begin{equation*}
 {\begin{array}{lllll}
&{\diagram(x_1&x_2& x_2&y_1\cr x_3& y_1)}&{\diagram(x_1&x_2& x_2&y_1\cr x_3& y_1\cr y_1)}
&{\diagram(x_1&x_2& x_2&y_1&y_3\cr x_3& y_1\cr y_1)}&{\diagram(x_1&x_2& x_2&y_1&y_2\cr x_3&y_1&y_3\cr y_1)} \\             \\
&{\diagram(1&2&3&4\cr 5& 6)}&{\diagram(1&2&3&4\cr 5&6\cr 7)}&
{\diagram(1&2&3&4&8\cr 5&6\cr 7)}&{\diagram(1&2&3&4&8\cr 5&6&9\cr 7)}.
      \end{array}}
   \end{equation*}
 \end{example}

\begin{point}{}* \label{Subsec:RSK}Now we generalize the RSK superinsertion algorithm to work for multipartitions, that is, a bijection from $\mathcal{Z}_{k,\ell}(n)$ to pairs of tableaux $(S,T)$, where $S$ is a $(\bk,\bl)$-semistandard tableau, the \textit{recording tableau} $T$ is a standard tableau, and $\mathrm{shape}(S)=\mathrm{shape}(T)=\bgl$ for some multipartition $\bgl\in\mpn$. For any $z_{\mathbf{j}}\in\mathcal{Z}_{k,\ell}(n)$, the RSK superinsertion algorithm constructs the pairs of tableaux $(S,T)=(S(z_\mathbf{j}),T(z_\mathbf{j)})$ iteratively:
 \begin{equation*}
   (\emptyset,\emptyset)=(S_0,T_0), (S_1,T_1), \ldots, (S_n,T_n)=(S,T)
 \end{equation*}
 by applying the following rules:
\begin{enumerate}
  \item[(1)] $S_i$ is a $(\bk,\bl)$-semistandard tableau containing $i$ nodes and $T_i$ is a standard tableau with $\mathrm{shape}(T_i)=\mathrm{shape}(S_i)$;

  \item[(2)] $S_i$ is obtained from $S_{i-1}$ by inserting $z_{j_i}$ into $S_{i-1}^{(c)}$, write as $S_i\!=\!S_{i-1}\!\leftarrow\! z_{j_i}$, by applying the RSK superinsertion algorithms when $z_{j_i}$ is being of color $c$;

  \item[(3)] $T_i$ is obtained  from $T_{i-1}$ by inserting number $i$ in the newly added box.
\end{enumerate}
\end{point}
\begin{remark}
  If $\ell=0$ (resp. $\bl=\textbf{0}$)  then the RSK superinsertion reduces to the classical RSK column insertions for partitions (resp. multipartitions) (see e.g. \cite[Chapter~3.1]{Sagan} and  \cite[\S~3]{CHM}). The RSK superinsertion here is different from the $(k,\ell)$-RoSch insertion given in \cite[\S2.5]{B-Regev}
\end{remark}

\begin{example}\label{Exam:2-insertion}The inserting $z_{\mathbf{j}}\!\!=\!\!\!\left(\!\!\begin{array}{cccccccccc}
  1&2&3&4&5&6&7&8&9&10\\x_1^{(\!1\!)}&y_1^{(\!1\!)}&x_1^{(\!2\!)}&x_2^{(\!1\!)}&x_2^{(\!1\!)}
  &x_1^{(\!2\!)}&y_1^{(\!2\!)}
  &y_1^{(\!2\!)}&y_1^{(\!1\!)}&y_2^{(\!1\!)}
\end{array}\!\!\right)$ (with $m=2$) is
\begin{equation*}\small\begin{array}{ccccc}
   S: &\left(\begin{array}{cc}\colordiagram(x_1^{(\!1\!)});&\emptyset\end{array}\right)
&\left(\begin{array}{cc}\colordiagram(x_1^{(\!1\!)}& y_1^{(\!1\!)});&\emptyset\end{array}\right)
&\left(\begin{array}{cc}\colordiagram(x_1^{(\!1\!)}& y_1^{(\!1\!)});&\colordiagram(x_1^{(\!2\!)})\end{array}\right)
&\left(\begin{array}{cc}\colordiagram(x_1^{(\!1\!)}& y_1^{(\!1\!)}\cr x_2^{(\!1\!)});&\colordiagram(x_1^{(\!2\!)})
   \end{array}\right)\\&&&&\\
  T:&\left(\begin{array}{cc}\colordiagram(1);&\emptyset\end{array}\right)
&\left(\begin{array}{cc}\colordiagram(1&2);&\emptyset\end{array}\right)
&\left(\begin{array}{cc}\colordiagram(1&2);&\colordiagram(3)\end{array}\right)
&\left(\begin{array}{cc}\colordiagram(1&2\cr4);&\colordiagram(3)
   \end{array}\right)\end{array}
   \end{equation*}
\begin{equation*}\small\begin{array}{cccc}&\left(\begin{array}{cc}
 \colordiagram(x_1^{(\!1\!)}&x_2^{(\!1\!)}&y_1^{(\!1\!)}\cr x_{2}^{(\!1\!)});&
 \colordiagram(x_1^{(2)})\end{array}\right)&\left(\begin{array}{cc}
   \colordiagram(x_1^{(\!1\!)}&x_2^{(\!1\!)}& y_1^{(\!1\!)}\cr x_{2}^{(\!1\!)});
   &\colordiagram(x_1^{(\!2\!)}&x_1^{(\!2\!)})
   \end{array}\right)&\left(\begin{array}{cc}
   \colordiagram(x_1^{(\!1\!)}&x_2^{(\!1\!)}& y_1^{(\!1\!)}\cr x_{2}^{(\!1\!)});
   &\colordiagram(x_1^{(\!2\!)}&x_1^{(\!2\!)}&y_1^{(\!2\!)})
   \end{array}\right)\\ &&&\\
   &\left(\begin{array}{cc}\colordiagram(1&2&5\cr4);&\colordiagram(3)\end{array}\right)
&\left(\begin{array}{cc}\colordiagram(1&2&5\cr4);&\colordiagram(3&6)\end{array}\right)
&\left(\begin{array}{cc}\colordiagram(1&2&5\cr4);&\colordiagram(3&6&7)\end{array}\right)\end{array}
   \end{equation*}
\begin{equation*}\tiny\small\begin{array}{cccc}&\left(\begin{array}{cc}
 \colordiagram(x_1^{(\!1\!)}&x_2^{(\!1\!)}& y_1^{(\!1\!)}\cr x_{2}^{(\!1\!)});
   &\colordiagram(x_1^{(\!2\!)}&x_1^{(\!2\!)}& y_1^{(\!2\!)}\cr y_1^{(\!2\!)})
   \end{array}\right)&\left(\begin{array}{cc}
   \colordiagram(x_1^{(\!1\!)}&x_2^{(\!1\!)}& y_1^{(\!1\!)}\cr x_{2}^{(\!1\!)}& y_1^{(\!1\!)});
   &\colordiagram(x_1^{(\!2\!)}&x_1^{(\!2\!)}&y_1^{(\!2\!)}\cr y_1^{(\!2\!)})
   \end{array}\right)&\left(\begin{array}{cc}
   \colordiagram(x_1^{(\!1\!)}&x_2^{(\!1\!)}& y_1^{(\!1\!)}&y_2^{(\!1\!)}\cr x_{2}^{(\!1\!)}& y_1^{(\!1\!)});
   &\colordiagram(x_1^{(\!2\!)}&x_1^{(\!2\!)}&y_1^{(\!2\!)}\cr y_1^{(\!2\!)})
   \end{array}\right)\\  &&&\\
    &\left(\begin{array}{cc}
   \colordiagram(1&2&5\cr4);&\colordiagram(3&6& 7\cr 8)\end{array}\right)
 &\left(\begin{array}{cc}\colordiagram(1&2&5\cr4&9);&\colordiagram(3&6&7\cr 8)\end{array}\right)
 &\left(\begin{array}{cc}\colordiagram(1&2&5&10\cr4&9);&\colordiagram(3&6&7\cr 8)\end{array}\right)
 \end{array}
   \end{equation*}
\end{example}
\begin{definition}
Let $\bgl=(\lambda^{(1)}, \ldots, \lambda^{(m)})\in\mpn$ and $T\in\mathrm{std}(\bgl)$. For $a,b\in T$, we define
\begin{eqnarray*}
   &&a\!\xymatrix@C=0.55cm{\ar@{->}[r]^{\substack{\mathrm{SW}}}&}\!b \text{ if }\left\{\begin{array}
     {l}a\in T^{(i)}, b\in T^{(j)} \text{ and } i\!<\!j,  \text{or } b \text{ south (below) and/or west (left) of } a  \text{ in } T^{(j)} \\  \text{ with } \bar{a}=\bar{b}=\bar{0},
     \text{or } b \text{ north (above) and/or east (right) of } a   \text{ in } T^{(j)} \text{ with }\\ \bar{a}=\bar{b}=\bar{1}, \text{ or }  b \text{ south (below) or east (right) of } a \text{ in } T^{(j)} \text{ with }(\bar{a},\bar{b})=(\bar{0},\bar{1}).
   \end{array}\right.\\
  &&a\!\xymatrix@C=0.55cm{\ar@{->}[r]^{\substack{\mathrm{NE}}}&}\!b \text{ if }\left\{\begin{array}
     {l}a\in T^{(i)}, b\in T^{(j)} \text{ and } j<i,  \text{or } b \text{ north (above) and/or east (right) of } a \\ \text{ in } T^{(j)}  \text{ with } \bar{a}=\bar{b}=\bar{0},
     \text{or } b \text{ south (below) and/or west (left) of } a \\  \text{ in } T^{(j)} \text{ with } (\bar{a},\bar{b})=(\bar{1},\bar{1}) \text{ or }(\bar{1},\bar{0}).
   \end{array}\right.\\
\end{eqnarray*}
\end{definition}

\begin{example}Let $\mathbf{z}_{\mathbf{j}}$ be the one in Examples~\ref{Exam:1-insertion} and \ref{Exam:2-insertion} respectively. Then we have
\begin{eqnarray*}
 &&1\!\xymatrix@C=0.55cm{\ar@{->}[r]^{\substack{\mathrm{NE}}}&}\!2\!
 \xymatrix@C=0.55cm{\ar@{->}[r]^{\substack{\mathrm{NE}}}&}\!3
  \!\xymatrix@C=0.55cm{\ar@{->}[r]^{\substack{\mathrm{NE}}}&}\!4
  \!\xymatrix@C=0.55cm{\ar@{->}[r]^{\substack{\mathrm{SW}}}&}\!5
  \!\xymatrix@C=0.55cm{\ar@{->}[r]^{\substack{\mathrm{SW}}}&}\!6
  \!\xymatrix@C=0.55cm{\ar@{->}[r]^{\substack{\mathrm{NE}}}&}\!7
  \!\xymatrix@C=0.55cm{\ar@{->}[r]^{\substack{\mathrm{SW}}}&}\!8
  \!\xymatrix@C=0.55cm{\ar@{->}[r]^{\substack{\mathrm{NE}}}&}\!9 \text{ and }\\&&
  1\!\xymatrix@C=0.55cm{\ar@{->}[r]^{\substack{\mathrm{SW}}}&}\!2\!
  \xymatrix@C=0.55cm{\ar@{->}[r]^{\substack{\mathrm{SW}}}&}\!3
  \!\xymatrix@C=0.55cm{\ar@{->}[r]^{\substack{\mathrm{NE}}}&}\!4\!
  \xymatrix@C=0.55cm{\ar@{->}[r]^{\substack{\mathrm{NE}}}&}\!5
  \!\xymatrix@C=0.55cm{\ar@{->}[r]^{\substack{\mathrm{SW}}}&}\!6
  \!\xymatrix@C=0.55cm{\ar@{->}[r]^{\substack{\mathrm{SW}}}&}\!7
  \!\xymatrix@C=0.55cm{\ar@{->}[r]^{\substack{\mathrm{NE}}}&}\!8
  \!\xymatrix@C=0.55cm{\ar@{->}[r]^{\substack{\mathrm{NE}}}&}\!9
  \!\xymatrix@C=0.55cm{\ar@{->}[r]^{\substack{\mathrm{SW}}}&}\!10.
\end{eqnarray*}
\end{example}

Recall that the linearly order of the variables defined in \S\ref{Point:index-bijection}. Then we have the following fact, which is a super-version of the ones about RSK insertion (see e.g. \cite[Proposition~2.1]{Ram98} or \cite[Proposition~3.1]{CHM}).
\begin{proposition}\label{Prop:SW-NE}
  Let $S_{j+1}=(S_{j-1}\!\!\leftarrow\!\!z_{i_j})\!\!\leftarrow\!\! z_{i_{j+1}}$ with $S_{j-1}$ is $(\bk,\bl)$-semistandard and let $T_{j+1}$ be the associated recording tableau. The following assertions hold: \begin{enumerate}
                     \item[(1)] If $z_{i_j}<z_{i_{j+1}}\in \mathbf{x}$ then $j\!\xymatrix@C=0.55cm{\ar@{->}[r]^{\substack{\mathrm{SW}}}&}\!j+1$ in $T_{j+1}$.
                     \item[(2)] If $z_{i_j}\geq z_{i_{j+1}}\in \mathbf{y}$ then $j\!\xymatrix@C=0.55cm{\ar@{->}[r]^{\substack{\mathrm{NE}}}&}\!j+1$ in $T_{j+1}$.
                       \end{enumerate}
\end{proposition}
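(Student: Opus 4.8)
The plan is to reduce both assertions to a comparison of the two successive insertion (bumping) paths produced by inserting $z_{i_j}$ and then $z_{i_{j+1}}$, adapting the arguments of \cite[Proposition~2.1]{Ram98} and \cite[Proposition~3.1]{CHM} to the super setting. Since the variables $z_1<\cdots<z_{k+\ell}$ are ordered so that the colour is weakly increasing, the hypotheses first split according to colour. If $c(z_{i_j})\neq c(z_{i_{j+1}})$, then in case (1) the inequality $z_{i_j}<z_{i_{j+1}}$ forces $c(z_{i_j})<c(z_{i_{j+1}})$, while in case (2) the inequality $z_{i_j}\ge z_{i_{j+1}}$ forces $c(z_{i_j})>c(z_{i_{j+1}})$. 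The two letters are then inserted into different components, so that $j\in T^{(a)}$ and $j+1\in T^{(b)}$ with $a<b$ in case (1) and $b<a$ in case (2); the first clause in the definition of the $\mathrm{SW}$-relation (resp.\ the $\mathrm{NE}$-relation) applies at once. It therefore remains to treat $c(z_{i_j})=c(z_{i_{j+1}})$, where both letters are inserted into one and the same component and everything reduces to the single-partition super RSK insertion.

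Working inside that single component, let $b_j$ and $b_{j+1}$ denote the boxes adjoined when $z_{i_j}$ and $z_{i_{j+1}}$ are inserted; these carry the labels $j$ and $j+1$ in $T_{j+1}$. I would prove the single geometric statement that $b_{j+1}$ lies weakly to the south-west of $b_j$ (weakly below and weakly to the left, and distinct from it). This one statement yields both assertions once parities are read off: in case (1) both letters lie in $\mathbf{x}$, so $\overline{j}=\overline{j+1}=\overline{0}$ and ``south and/or west'' is exactly the second clause defining the $\mathrm{SW}$-relation; in case (2) both lie in $\mathbf{y}$, so $\overline{j}=\overline{j+1}=\overline{1}$ and the very same position is exactly the third clause defining the $\mathrm{NE}$-relation.

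Case (2) is the cleaner half. A $\mathbf{y}$-letter is inserted by row insertion and, by the row-insertion rule, only ever bumps further $\mathbf{y}$-letters; hence both bumping paths remain inside the (column-semistandard) $\mathbf{y}$-region, where the desired position is the transpose of Schensted's classical row-bumping lemma applied to the descent $z_{i_j}\ge z_{i_{j+1}}$, giving a new box strictly lower and weakly to the left. I would establish it by the standard induction comparing the two paths row by row. In case (1) the two $\mathbf{x}$-letters are inserted by column insertion, and the analogous column-bumping comparison for the ascent $z_{i_j}<z_{i_{j+1}}$ again gives a new box strictly lower and weakly to the left, provided both paths stay inside the $\mathbf{x}$-region.

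The hard part is that an $\mathbf{x}$-column-insertion, unlike a $\mathbf{y}$-row-insertion, need not stay in a single region: by the column-insertion rule it may bump the topmost $\mathbf{y}$ of a column and thereupon switch to row insertion inside the $\mathbf{y}$-region. The main obstacle is thus to show that the path comparison survives this corner turn at the $\mathbf{x}/\mathbf{y}$ boundary, i.e.\ that the path of the larger letter $z_{i_{j+1}}$ stays weakly to the left of and weakly below that of $z_{i_j}$ both during its column-insertion phase and after it crosses into its row-insertion phase. I expect to settle this by a boundary case analysis that uses the strict increase of $\mathbf{x}$-entries down columns, the strict increase of $\mathbf{y}$-entries along rows, and the strict inequality $z_{i_j}<z_{i_{j+1}}$ to pin down where each path can turn; locating the two terminal boxes then gives the south-west position and hence the $\mathrm{SW}$-relation between $j$ and $j+1$.
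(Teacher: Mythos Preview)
Your approach is correct and is essentially the one the paper takes: split by colour, and in the same-colour case reduce to the classical column- and row-bumping lemmas for RSK insertion. The paper's own proof is a two-sentence appeal to ``the linearly ordering on variables and well-known facts about the classical column and row RSK insertion algorithms'' together with $\mathbf{x}^{(a)}<\mathbf{y}^{(a)}<\mathbf{x}^{(a+1)}$; it does not isolate the $\mathbf{x}$-to-$\mathbf{y}$ boundary crossing that you (rightly) identify as the step requiring care, so your sketch is in fact more detailed than what the paper supplies.
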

\begin{proof}Note that the RSK superinsertion algorithm inserts the variables in $\mathbf{x}$ (resp. $\mathbf{y}$)  by applying classical column (resp. row) RSK insertion algorithm and  $\mathbf{x}^{(a)}<\mathbf{y}^{(a)}<\mathbf{x}^{(a+1)}$ for $a=1,\ldots, m-1$. The two  assertions follows directly by the linearly ordering on variables and well-known facts about the classical column and row RSK insertion algorithms.
\end{proof}

\begin{definition}\label{Def:mu-SW-NE}For $\bgl, \bmu\in \mpn$, we say that $T\in \mathrm{std}(\bgl)$ is \textit{$\bmu$-SW-NE} if it satisfies the property: if $i, i+1, \ldots, i+j$ is in a row of $\mathfrak{t}^{\bmu}$ then
\begin{equation*}
  \begin{array}{l}i\!\xymatrix@C=0.55cm{\ar@{->}[r]^{\substack{\mathrm{SW}}}&}
 \!i+1\!\xymatrix@C=0.55cm{\ar@{->}[r]^{\substack{\mathrm{SW}}}&}
 \!\cdots\!\xymatrix@C=0.55cm{\ar@{->}[r]^{\substack{\mathrm{SW}}}&}
 \!p\!\xymatrix@C=0.55cm{\ar@{->}[r]^{\substack{\mathrm{NE}}}&}
 \!\cdots\!\xymatrix@C=0.55cm{\ar@{->}[r]^{\substack{\mathrm{NE}}}&}\!i+j\text{ in }T.
  \end{array}
\end{equation*}
 The number $p$ here is called the \textit{peak} of the row.\end{definition}

 If $T$ is a $\bmu$-SW-NE tableau, w denote by $P_{T}^{\bmu}$ the set of all peaks in $T$ and let $T_{\mathrm{SW}}$ (resp. $T_{\mathrm{NE}}$) be the set of numbers $j$ such that $j\!\xymatrix@C=0.55cm{\ar@{->}[r]^{\substack{\mathrm{SW}}}&}\!j\!+\!1$ (resp. $j\!\xymatrix@C=0.55cm{\ar@{->}[r]^{\substack{\mathrm{NE}}}&}\!j\!+\!1$) in $T$, one for each row of $\mathfrak{t}^{\bmu}$.

 \begin{definition}\label{Def:tableau-weight}Let $T$ be standard $\bgl$-tableau. We define its $\bmu$-weight\begin{eqnarray*}\mathrm{wt}_{\bmu}(\mathfrak{t})&=&\left\{\begin{array}{ll}
0, & \text{if $\mathfrak{t}$ is not $\bmu$-SW-NE}; \\
\left(-q^{-1}\right)^{\ell_0(T_{\mathrm{SW}})+\ell_1(T_{\mathrm{NE}})}
q^{\ell_0(T_{\mathrm{NE}})+\ell_1(T_\mathrm{SW})}
\displaystyle\prod_{i\in P_{T}^{\bmu}}Q_{c_{T}(i)}^{c_{\mathfrak{t}^{\bmu}}(i)}, & \text{if $\mathfrak{t}$ is  $\bmu$-SW-NE},\end{array}\right.
\end{eqnarray*}
where $\ell_a(T_{\mathrm{SW}})$ (resp. $\ell_a(T_{\mathrm{NE}})$) is the number of elements in  $T_{\mathrm{SW}}$ (resp. $T_{\mathrm{NE}}$) with degree $\bar{a}$ ($a=0,1$).
\end{definition}

\begin{example}\label{Exam:tableau-weight}Let $n=20$, $m=3$, $\bk|\bl=(1|1,1|2,1|3)$ and    $\bmu=\left((2,1,1);(3,2,2,1);(4,3,1)\right)$. Then 
\begin{equation*}
  \mathfrak{t}^{\bmu}=\biggl(\,\Tricolor(1&2\cr 3\cr 4 |5&6&7\cr 8&9
  \cr 10&11\cr 12|13&14&15&16
  \cr 17&18&19\cr 20)\,\biggr)
\end{equation*}
Now we insert the sequence $\mathbf{i}$ of Example~\ref{Exam:mu-weight} via the RSK superinsertion algorithm. Thanks to the bijection in \S\ref{Point:index-bijection} (via the row reading), we get the corresponding variables
\begin{equation*}
  \mathbf{i}=\biggl(\,\Tricolor(x_1^{(\!1\!)}&x_1^{(\!2\!)}\cr y_{1}^{(\!1\!)}\cr y_1^{(\!2\!)}  |x_1^{(\!3\!)}& y_{1}^{(\!3\!)}& y_{3}^{(\!3\!)}\cr  y_{1}^{(\!1\!)}& y_{1}^{(\!1\!)}
  \cr  y_{2}^{(\!2\!)}& y_{1}^{(\!2\!)}\cr  y_{1}^{(\!3\!)}| y_{2}^{(\!3\!)}& x_{1}^{(\!3\!)}& y_{2}^{(\!2\!)}& y_{1}^{(\!3\!)}
  \cr  x_{1}^{(\!2\!)}& y_{1}^{(\!2\!)}& x_{1}^{(\!3\!)}\cr  y_{2}^{(\!3\!)})\,\biggr).
\end{equation*}
Applying the  RSK superinsertion algorithm, we obtain
\begin{equation*}
  S=\biggl(\,\Tricolor(x_1^{(\!1\!)}&y_1^{(\!1\!)}\cr y_1^{(\!1\!)}\cr y_1^{(\!1\!)}|
  x_1^{(\!2\!)}&x_1^{(\!2\!)}&y_1^{(\!2\!)}\cr y_1^{(\!2\!)}&y_2^{(\!2\!)}\cr y_1^{(\!2\!)}\cr y_2^{(\!2\!)}|x_1^{(\!3\!)}&x_1^{(\!3\!)}&x_1^{(\!3\!)}&y_1^{(\!3\!)}&y_2^{(\!3\!)}
  \cr y_1^{(\!3\!)}&y_2^{(\!3\!)}& y_3^{(\!3\!)}\cr y_1^{(\!3\!)})\,\biggr),
\end{equation*}
\begin{equation*}
 T=\biggl(\,\Tricolor(1&3\cr 8\cr 9|
  2&4&10\cr 11&15\cr 17\cr 18|5&6&7&13&20
  \cr 12&14\cr 19\cr 16)\,\biggr).
\end{equation*}
Therefore 
\begin{eqnarray*}
   &&T_{\mathrm{SW}}=\{1,5,6,15,17,18\}, \\
  &&T_{\mathrm{NE}}=\{8,10,13,14\}, \text{ and } \\
  &&P_{T}^{\bmu}=\{2,3,4,7,8,10,12,13,19,20\}.
\end{eqnarray*}
So $\ell_0(T_{\mathrm{SW}})=\ell_1(T_{\mathrm{SW}})=3$, $\ell_0(T_{\mathrm{NE}})=1$,
$\ell_1(T_{\mathrm{NE}})=3$, and $\mathrm{wt}_{\bmu}(T)=q^{-2}Q_1^3Q_2^4Q_3^{13}=\mathrm{wt}_{\bmu}(\mathbf{i})$.
\end{example}

\begin{theorem}\label{Them:q=wt-Schur}For $\bmu\in\mpn$, we have
\begin{equation*}
  q_{\bmu}(\bx/\by;q,\bq)=\sum_{\bgl\in\mpn}\biggl(\sum_{\ft\in\mathrm{std}(\bgl)}
  \mathrm{wt}_{\bmu}(\ft)\biggr)s_{\bgl}(\bx/\by).
\end{equation*}
\end{theorem}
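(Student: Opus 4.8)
The plan is to transport the combinatorial expression of Corollary~\ref{Cor:q-mu-wt},
\begin{equation*}
  q_{\bmu}(\bx/\by;q,\bq)=\sum_{\mathbf{i}\text{ is $\bmu$-up-down}}\mathrm{wt}_{\bmu}(\mathbf{i})\mathbf{z}_{\mathbf{i}},
\end{equation*}
through the RSK superinsertion bijection of \S\ref{Subsec:RSK}. Identifying a sequence $\mathbf{i}=(i_1,\ldots,i_n)$ with the element of $\mathcal{Z}_{k,\ell}(n)$ whose bottom row is $z_{i_1},\ldots,z_{i_n}$, superinsertion attaches to it a pair $(S,T)=(S(z_{\mathbf{i}}),T(z_{\mathbf{i}}))$ with $S$ a $(\bk,\bl)$-semistandard tableau, $T$ a standard tableau, and $\mathrm{shape}(S)=\mathrm{shape}(T)=\bgl$ for some $\bgl\in\mpn$. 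Since the map is a bijection onto all such pairs of equal shape, summing over $\mathbf{i}$ amounts to summing over $\bgl$, then over $T\in\mathrm{std}(\bgl)$, and finally over all $(\bk,\bl)$-semistandard $S$ of shape $\bgl$. Two identities then finish the proof:
\begin{equation*}
  \mathbf{z}_{\mathbf{i}}=S(\bx/\by)\qquad\text{and}\qquad\mathrm{wt}_{\bmu}(\mathbf{i})=\mathrm{wt}_{\bmu}(T),
\end{equation*}
where $S(\bx/\by)$ denotes the monomial of Equ.~(\ref{Equ:Schur=tableau}).

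The first identity is routine: superinsertion preserves content, so the multiset of entries of $S$ equals $\{z_{i_1},\ldots,z_{i_n}\}$, and comparing the signs carried by the odd ($\by$-)variables yields $\mathbf{z}_{\mathbf{i}}=S(\bx/\by)$. Granting the second identity, I regroup and invoke the tableau expansion $S_{\bgl}(\bx/\by)=\sum_{S}S(\bx/\by)$ to obtain
\begin{align*}
  q_{\bmu}(\bx/\by;q,\bq)
  &=\sum_{\bgl\in\mpn}\sum_{T\in\mathrm{std}(\bgl)}\mathrm{wt}_{\bmu}(T)\sum_{S\in\mathrm{std}_{\bk|\bl}(\bgl)}S(\bx/\by)\\
  &=\sum_{\bgl\in\mpn}\biggl(\sum_{T\in\mathrm{std}(\bgl)}\mathrm{wt}_{\bmu}(T)\biggr)S_{\bgl}(\bx/\by).
\end{align*}
Dropping the $\bmu$-SW-NE restriction here is harmless because $\mathrm{wt}_{\bmu}(T)=0$ otherwise, and the non-hook $\bgl$ contribute nothing since $S_{\bgl}(\bx/\by)=0$ for them.

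The identity $\mathrm{wt}_{\bmu}(\mathbf{i})=\mathrm{wt}_{\bmu}(T)$ is the heart of the argument, and I would establish it row by row along $\mathfrak{t}^{\bmu}$. For a block $a,a+1,\ldots,a+r$ filling a row of $\mathfrak{t}^{\bmu}$, Proposition~\ref{Prop:SW-NE}, supplemented by the cross-colour and mixed-parity cases read off from the linear order $\bx^{(c)}<\by^{(c)}<\bx^{(c+1)}$ and the definition of the relations $\xymatrix@C=0.55cm{\ar@{->}[r]^{\substack{\mathrm{SW}}}&}$ and $\xymatrix@C=0.55cm{\ar@{->}[r]^{\substack{\mathrm{NE}}}&}$, shows that an ascent $i_j<i_{j+1}$ produces a step $j\!\xymatrix@C=0.55cm{\ar@{->}[r]^{\substack{\mathrm{SW}}}&}\!j+1$ and a descent $i_j\geq i_{j+1}$ a step $j\!\xymatrix@C=0.55cm{\ar@{->}[r]^{\substack{\mathrm{NE}}}&}\!j+1$ in $T$, the degree attached to $j$ in $T$ being the parity of the inserted letter $z_{i_j}$. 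Consequently $\mathbf{i}$ is $\bmu$-up-down exactly when $T$ is $\bmu$-SW-NE, and the four counts coincide, $\ell_0(\mathbf{i}_<)=\ell_0(T_{\mathrm{SW}})$, $\ell_1(\mathbf{i}_<)=\ell_1(T_{\mathrm{SW}})$, $\ell_0(\mathbf{i}_{\geq})=\ell_0(T_{\mathrm{NE}})$ and $\ell_1(\mathbf{i}_{\geq})=\ell_1(T_{\mathrm{NE}})$, so the powers of $-q^{-1}$ and of $q$ agree. For the $\bq$-part, each row contributes one peak whose factor is $Q_{c}^{\,e}$ with $e$ the component containing the row and $c$ the largest colour occurring among its letters; since superinsertion records the $j$-th letter in the component of $T$ matching its colour $c(i_j)$, this largest colour is carried to the corresponding peak of $T$, and the products over rows agree. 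Example~\ref{Exam:tableau-weight} exhibits this identity in a worked case. The main obstacle is therefore purely combinatorial: a complete and careful verification of the ascent/descent-to-SW/NE dictionary across every colour and parity configuration, of which Proposition~\ref{Prop:SW-NE} records only the two same-colour representatives, together with the bookkeeping that transports the largest colour of each row to the peak of the recording tableau.
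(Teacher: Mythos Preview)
Your proposal is correct and follows exactly the paper's approach: start from Corollary~\ref{Cor:q-mu-wt}, push the sum through the RSK superinsertion bijection, use the key identity $\mathrm{wt}_{\bmu}(\mathbf{i})=\mathrm{wt}_{\bmu}(T)$ (justified, as in the paper, by comparing Definitions~\ref{Def:mu-up-down}--\ref{Def:mu-weight} with Definitions~\ref{Def:mu-SW-NE}--\ref{Def:tableau-weight} via Proposition~\ref{Prop:SW-NE}), and collapse the inner sum over $(\bk,\bl)$-semistandard tableaux into $S_{\bgl}(\bx/\by)$ via Equ.~(\ref{Equ:Schur=tableau}). You are in fact more explicit than the paper about the row-by-row bookkeeping and the residual case-analysis behind the ascent/descent $\leftrightarrow$ SW/NE dictionary, which the paper's proof simply asserts.
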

\begin{proof}Comparing Definitions~\ref{Def:mu-up-down} and \ref{Def:mu-weight} with Definitions~\ref{Def:mu-SW-NE} and \ref{Def:tableau-weight}, we see that the RSK superinsertion algorithm satisfies \begin{equation*}
  \text{if } (S,T)\xymatrix@C=0.65cm{\ar@{<->}[r]^{\substack{\mathrm{RSK}}}&}
  z_{\mathbf{i}}=\left(\begin{array}{ccc}1&\cdots& n\\
 z_{i_1}& \cdots&z_{i_n}\\ \end{array} \right) \text{ then } \mathrm{wt}_{\bmu}(\mathbf{i})=\mathrm{wt}_{\bmu}(T).
\end{equation*}
Thanks to Corollary~\ref{Cor:q-mu-wt}, we yield
\begin{eqnarray*}
  q_{\bmu}(\bx/\by;q,\mathbf{Q})&=& \sum_{\mathbf{i}=(i_1, \ldots, i_n)}\mathrm{wt}_{\bmu}(\mathbf{i})\mathbf{z}_{\mathbf{i}}\\
 &=&\sum_{\bgl\in\mpn}\sum_{(S,T)\in\mathrm{std}_{\bk|\bl}(\bgl)\times\mathrm{std}(\bgl)}
 \mathrm{wt}_{\bmu}(T)S(\bx/\by)\\
 &=&\sum_{\bgl\in\mpn}\sum_{T\in\mathrm{std}(\bgl)}
 \mathrm{wt}_{\bmu}(T)\sum_{S\in\mathrm{std}_{\bk|\bl}(\bgl)}S(\bx/\by)\\
 &=&\sum_{\bgl\in\mpn}\sum_{T\in\mathrm{std}(\bgl)}
 \mathrm{wt}_{\bmu}(T)s_{\bgl}(\bx/\by),
\end{eqnarray*}
where the last equality follows by applying Equ.~(\ref{Equ:Schur=tableau}).
\end{proof}

\begin{remark}\label{Remark:m=1}If $m=1$ then the super Frobenius formula Equ.~(\ref{Equ:char-Schur}) is  Mitsuhashi's super Frobenius formula for the characters of Iwahaori-Hecke algebra $H_n(q)$ (cf. \cite[Theorem~5.5]{M2010}):
For $\mu\vdash n$,
\begin{equation*}
  q_{\mu}(\bx/\by;q)=\sum_{\lambda\vdash n}\biggl(\sum_{\ft\in\mathrm{std}(\lambda)}
  \mathrm{wt}_{\mu}(\ft)\biggr)s_{\lambda}(\bx/\by).
\end{equation*}
Therefore Theorem~\ref{Them:q=wt-Schur} implies  for $\mu\vdash n$,
\begin{equation*}
  q_{\mu}(\bx/\by;q)=\sum_{\lambda\vdash n}\biggl(\sum_{\ft\in\mathrm{std}(\lambda)}
  \mathrm{wt}_{\mu}(\ft)\biggr)s_{\lambda}(\bx/\by),
\end{equation*}
which is a super-version of Ram's formula in \cite{Ram,Ram98}.
\end{remark}

Note that the super Schur functions $s_{\bgl}(\bx/\by)$ are linearly independent \cite[Lemma~6.4]{B-Regev}. Equ.~(\ref{Equ:char-Schur}) and Theorem~\ref{Them:q=wt-Schur} implies
\begin{corollary}\label{Cor:Character}
  For $\bgl,\bmu\in\mpn$, we have 
  \begin{equation*}
    \chi_q^{\bgl}(\bmu)=\sum_{T\in\mathrm{std}(\bgl)}\mathrm{wt}_{\bmu}(T).
  \end{equation*}
\end{corollary}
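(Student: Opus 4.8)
The plan is to feed the combinatorial expansion of Corollary~\ref{Cor:q-mu-wt} through the RSK superinsertion bijection of \S\ref{Subsec:RSK}, so that the theorem reduces to a single weight-preservation statement. First I would observe that $\mathrm{wt}_{\bmu}(\mathbf{i})=0$ unless $\mathbf{i}$ is $\bmu$-up-down, so Corollary~\ref{Cor:q-mu-wt} may be written as a sum over \emph{all} parity-integer sequences,
\begin{equation*}
  q_{\bmu}(\bx/\by;q,\bq)=\sum_{\mathbf{i}=(i_1,\ldots,i_n)\in\mathscr{I}(n;k|\ell)}\mathrm{wt}_{\bmu}(\mathbf{i})\,\mathbf{z}_{\mathbf{i}}.
\end{equation*}
Each $\mathbf{i}$ determines the word $z_{\mathbf{i}}\in\mathcal{Z}_{k,\ell}(n)$, so the RSK superinsertion bijection $z_{\mathbf{i}}\leftrightarrow(S,T)$ lets me re-index this sum over pairs $(S,T)$ of tableaux of a common shape $\bgl\in\mpn$, with $S$ a $(\bk,\bl)$-semistandard tableau and $T$ a standard tableau.

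The argument then rests on two compatibility facts. The first is that the monomial is an invariant of insertion: since RSK preserves the multiset of inserted variables together with their parities, $\mathbf{z}_{\mathbf{i}}=S(\bx/\by)$ in the notation of Equ.~(\ref{Equ:Schur=tableau}). The second, and genuinely substantial, fact is the weight identity $\mathrm{wt}_{\bmu}(\mathbf{i})=\mathrm{wt}_{\bmu}(T)$. To prove it I would treat one row of $\mathfrak{t}^{\bmu}$ at a time, each occupying a block of consecutive positions $a,a+1,\ldots,a+r$, and show that the up-down pattern of the subsequence $(i_a,\ldots,i_{a+r})$ is matched, step by step, with the $\mathrm{SW}/\mathrm{NE}$ pattern of the numbers $a,\ldots,a+r$ in $T$: concretely that an ascent $i_j<i_{j+1}$ forces $j\xrightarrow{\mathrm{SW}}j+1$ and a descent $i_j\ge i_{j+1}$ forces $j\xrightarrow{\mathrm{NE}}j+1$. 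This makes $\mathbf{i}$ being $\bmu$-up-down equivalent to $T$ being $\bmu$-SW-NE, and places the peak of each up-down row at exactly the $\mathrm{SW}\to\mathrm{NE}$ transition of the corresponding row of $T$. Because the variable inserted at position $j$ is $z_{i_j}$, the degree of the number $j$ in $T$ equals the parity $\overline{i_j}$; comparing Definition~\ref{Def:mu-weight} with Definition~\ref{Def:tableau-weight}, the four cases (ascent/descent)$\times$(parity $\bar0/\bar1$) then match the four cases $(\mathrm{SW}/\mathrm{NE})\times(\text{degree }\bar0/\bar1)$ factor for factor, so the powers of $q$ and of $-q^{-1}$ agree. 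For the $\bq$-part, a peak value inserted at a peak position is routed into the component of $S$ (hence of $T$) given by its color, so $Q_{c(i_{p})}=Q_{c_{T}(p)}$, while the exponent $c_{\mathfrak{t}^{\bmu}}(p)$ is read off $\mathfrak{t}^{\bmu}$ identically in both definitions; hence the products over peaks coincide.

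Granting these, the computation closes by regrouping the re-indexed sum according to the common shape $\bgl$ and the recording tableau $T$, and collapsing the inner sum over insertion tableaux $S$ via Equ.~(\ref{Equ:Schur=tableau}):
\begin{equation*}
  q_{\bmu}(\bx/\by;q,\bq)=\sum_{\bgl\in\mpn}\sum_{T\in\mathrm{std}(\bgl)}\mathrm{wt}_{\bmu}(T)\sum_{S\in\mathrm{std}_{\bk|\bl}(\bgl)}S(\bx/\by)=\sum_{\bgl\in\mpn}\biggl(\sum_{T\in\mathrm{std}(\bgl)}\mathrm{wt}_{\bmu}(T)\biggr)s_{\bgl}(\bx/\by).
\end{equation*}

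The main obstacle is the step-by-step correspondence between ascents/descents and $\mathrm{SW}/\mathrm{NE}$ arrows. Proposition~\ref{Prop:SW-NE} supplies precisely the two cases in which the newly inserted variable lies in $\bx$ (ascent $\to$ $\mathrm{SW}$) or in $\by$ (descent $\to$ $\mathrm{NE}$), but the remaining mixed cases—an ascent whose right endpoint is a $\by$-variable, or a descent whose right endpoint is an $\bx$-variable—genuinely occur inside an up-down row and must be settled by the same analysis of column and row insertion, now invoking the mixed-parity clauses of the $\mathrm{SW}/\mathrm{NE}$ definition. Additional care is needed at the two ends of each row, and for one-element rows, where the peak carries a $Q$-factor but no accompanying step; checking that the bookkeeping of steps versus the single peak position is consistent there is where I would expect the fiddliest verification.
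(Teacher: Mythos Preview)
Your argument reproduces the proof of Theorem~\ref{Them:q=wt-Schur}, not of the corollary you were asked to prove. What you establish is the identity
\[
  q_{\bmu}(\bx/\by;q,\bq)=\sum_{\bgl\in\mpn}\biggl(\sum_{T\in\mathrm{std}(\bgl)}\mathrm{wt}_{\bmu}(T)\biggr)s_{\bgl}(\bx/\by),
\]
and then stop. The statement to be proved is about the irreducible characters $\chi_q^{\bgl}(\bmu)$, and these never appear in your proposal. The paper's proof of the corollary is a one-line coefficient comparison: one juxtaposes the identity above (Theorem~\ref{Them:q=wt-Schur}) with the super Frobenius formula Equ.~(\ref{Equ:char-Schur}),
\[
  q_{\bmu}(\bx/\by;q,\bq)=\sum_{\bgl\in\mpn}\chi_q^{\bgl}(\bmu)\,S_{\bgl}(\bx/\by),
\]
and invokes the linear independence of the super Schur functions $S_{\bgl}(\bx/\by)$ (for $\bgl\in H(\bk|\bl;m,n)$, with $\bk,\bl$ large enough) to equate coefficients. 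You need to add exactly this step.

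As for the part you did write: your derivation of Theorem~\ref{Them:q=wt-Schur} follows the paper's own proof closely (RSK bijection, then $\mathbf{z}_{\mathbf{i}}=S(\bx/\by)$, then $\mathrm{wt}_{\bmu}(\mathbf{i})=\mathrm{wt}_{\bmu}(T)$, then collapse via Equ.~(\ref{Equ:Schur=tableau})). You are right to flag the mixed-parity cases of the ascent/descent $\leftrightarrow$ SW/NE correspondence as needing attention beyond what Proposition~\ref{Prop:SW-NE} states explicitly; the paper glosses over this by appealing to the comparison of Definitions~\ref{Def:mu-up-down}--\ref{Def:mu-weight} with Definitions~\ref{Def:mu-SW-NE}--\ref{Def:tableau-weight}, so your more careful treatment is appropriate. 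But none of that extra care touches the actual gap: the missing comparison with Equ.~(\ref{Equ:char-Schur}).
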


\begin{remark}\label{Remark:q=1}Putting $q=1$ and $Q_i=\varsigma^i$,   Theorem~\ref{Them:q=wt-Schur} enables us to obtain the following super-symmetric function identity
 \begin{equation*}
   P_{\bmu}(\bx/\by)=\biggl.\sum_{\bgl\in\mpn}\biggl(\sum_{\mathfrak{t}\in\mathrm{std}(\bgl)}\mathrm{wt}_{\bmu}
   (\mathfrak{t})\biggr|_{\substack{q=1\\Q_i=\varsigma^i}}\biggr)s_{\bgl}(\bx/\by)
 \end{equation*}
 and a character formula  \begin{equation*}
    \chi_1^{\bgl}(w_{\bmu})=\biggl.\sum_{\mathfrak{t}\in\mathrm{std}(\bgl)}
    \mathrm{wt}_{\bmu}(\mathfrak{t})\biggr|_{\substack{q=1\\Q_i=\varsigma^i}}
  \end{equation*}
for complex reflection group  $W_{m,n}$. Note that Cantrell et. al. \cite[Remark~3.5]{CHM} give a character formula for $W_{m,n}$.
 \end{remark}

\begin{remark}For $\bgl\in\mpn$, we denote by $d_{\bgl}$ the number of standard $\bgl$-tableaux, that is, $d_{\bgl}$ is the dimension of the irreducible representation of $H$ or $W_{m,n}$ corresponding to $\bgl$.  As a special case of the RSK superinsertion, we can restrict to sequences $\mathbf{z}=z_{i_1}^{(j_1)}\cdots z_{i_n}^{(j_n)}$ where $i_1, \ldots, i_n$ is a permutation of $n$ and $1\leq j_r\leq m$, that is, $k_i+\ell_i=n$ for $i=1, \ldots, m$. There are $n!m^n$ such sequences. Further, when we insert these special sequences, we get a pair $(S,T)$-standard tableaux (the $(\bk,\bl)$-semistandard tableau $S$ is standard because all the elements in $\mathbf{}$ are different). Thus the RSK superinsertion algorithm gives the following equality
 \begin{equation*}
   n!m^n=\sum_{\bgl\in H(\bk|\bl;m,n)}d_{\bgl}^2,
 \end{equation*}
 where $\bk$, $\bl$ satisfies $k_i+\ell_i=n$ for  $i=1, \ldots, m$. 
\end{remark}

\end{document}